\numberwithin{equation}{section}
\theoremstyle{plain}
\newtheorem{theorem}{Theorem}[section]
\newtheorem{lemma}[theorem]{Lemma}
\newtheorem{proposition}[theorem]{Proposition}
\newtheorem{corollary}[theorem]{Corollary}
\theoremstyle{definition}
\newtheorem{example}[theorem]{Example}
\newtheorem{remark}[theorem]{Remark}
\newtheorem{question}[theorem]{Question}
\begin{document}

\title[Jordan Derivations and Lie derivations on Path algebras]
{Jordan Derivations and Lie derivations on Path Algebras}

\author{Yanbo Li and Feng Wei}

\address{Li: Department of Information and Computing Sciences, Northeastern
University at Qinhuangdao, Qinhuangdao, 066004, P.R. China}

\email{liyanbo707@163.com}

\address{Wei: School of Mathematics, Beijing Institute of
Technology, Beijing, 100081, P. R. China}

\email{daoshuo@hotmail.com}

\begin{abstract}
Without the faithful assumption, we prove that every Jordan
derivation on a class of path algebras of quivers without oriented
cycles is a derivation and that every Lie derivation on such kinds
of algebras is of the standard form.
\end{abstract}

\subjclass[2000]{15A78, 16W25}

\keywords{Path algebra, Jordan derivation, Lie derivation}

\thanks{The work of the first author is supported by Fundamental Research
Funds for the Central Universities (N110423007). The work of the
second author is partially supported by the National Natural Science
Foundation of China (Grant No. 10871023).}

\maketitle
\section{Introduction}

Let $\mathcal{R}$ be a commutative ring with identity, $\mathcal{A}$
be a unital algebra over $\mathcal{R}$ and $\mathcal{Z(A)}$ be the
center of $\mathcal{A}$. We set $a\circ b=ab+ba$ and $[a, b]=ab-ba$
for all $a,b\in \mathcal{A}$. Recall that an $\mathcal{R}$-linear
mapping $\Theta$ from $\mathcal{A}$ into itself is called a
\textit{derivation} if
$$
\Theta(ab)=\Theta(a)b+a\Theta(b)
$$
for all $a, b\in \mathcal{A}$, a \textit{Jordan derivation} if
$$
\Theta(a\circ b)=\Theta(a)\circ b+a \circ \Theta(b)
$$
for all $a\in \mathcal{A}$, and a \textit{Lie derivation} if
$$
\Theta([a, b])=[\Theta(a), b]+[a, \Theta(b)]
$$
for all $a, b\in \mathcal{A}$. Moreover, in the $2$-torsion free
case the definition of a Jordan derivation is equivalent to $
\Theta(a^2)=\Theta(a)a+a\Theta(a)$ for all $a\in \mathcal{A}$. We
say a Lie derivation $\Theta$ is \textit{standard} if it can be
expressed as
$$
\Theta=D+\Phi, \eqno(\spadesuit)
$$
where $D$ is an ordinary derivation of $\mathcal{A}$ and $\Phi$ is a
linear mapping from $\mathcal{A}$ into the center $\mathcal{Z(A)}$
of $\mathcal{A}$.

Jordan derivations and Lie derivations of associative algebras play
significant roles in various mathematical areas, in particular in
matrix theory, in ring theory and in the theory of operator
algebras. There are two common problems in this context:

\begin{enumerate}
\item[(a)] whether Jordan derivations are derivations;

\item[(b)] whether Lie derivations are of the
standard form $(\spadesuit)$.
\end{enumerate}
Let $\mathcal{A,B}$ be two unital algebras over the commutative ring
$\mathcal{R}$, $\mathcal{M}$ be a faithful $(\mathcal{A,
B})$-bimodule and $\mathcal{N}$ be a $(\mathcal{B, A})$-bimodule. We
denote the full matrix algebra of all $n\times n$ matrices over
$\mathcal{R}$, the triangular algebra consisting of $\mathcal{A, B}$
and $\mathcal{M}$, and the generalized matrix algebra consisting of
$\mathcal{A, B, M}$ and $\mathcal{N}$ by
$$
M_{n\times n}(\mathcal{R}), \hspace{6pt} \mathcal{T_R}=\left[
\begin{array}
[c]{cc}%
\mathcal{A} & \mathcal{M}\\
0 & \mathcal{B}\\
\end{array}
\right] \hspace{6pt} {\rm and} \hspace{6pt} \mathcal{G_R}=\left[
\begin{array}
[c]{cc}%
\mathcal{A} & \mathcal{M}\\
\mathcal{N} & \mathcal{B}\\
\end{array}
\right]
$$
respectively. In \cite{JacobsonRickart} Jacobson and Rickart proved
that every Jordan derivation on $M_{n\times n}(\mathcal{R})$ is a
derivation. Zhang and Yu \cite{ZhangYu} obtained that every Jordan
derivation on $\mathcal{T_R}$ is a derivation. Xiao and Wei
\cite{XiaoWei1} extended this result to the higher case and obtained
that any Jordan higher derivation on a triangular algebra is a
higher derivation. Alaminos et al \cite{AlaminosBresarVillena}
showed that every Lie derivation on the full matrix algebra
$M_{n\times n}(\mathbb{F})$ of all $n\times n$ matrices over a field
$\mathbb{F}$ of characteristic zero has the standard form
$(\spadesuit)$. Cheung \cite{Cheung} considered Lie derivations of
triangular algebras and gave a sufficient and necessary condition
which enables every Lie derivations of $\mathcal{T_R}$ to be
standard $(\spadesuit)$. Yu and Zhang extended Cheung's work to the
nonlinear case \cite{YuZhang}. Benkovic investigated the structure
of Jordan derivations and Lie derivations from $\mathcal{T_R}$ into
its bimodule in \cite{Benkovic1} and \cite{Benkovic3}. More
recently, the current authors \cite{LiWei1, LiWei2} described the
structures of Jordan and Lie derivations on the generalized matrix
algebra $\mathcal{G_R}$. It was shown that under certain conditions,
every Jordan derivation on $\mathcal{G_R}$ can be decomposed as the
sum of a derivation and an anti-derivation. We also proved that
under some mild assumptions, every Lie derivation on $\mathcal{G_R}$
has the standard form $(\spadesuit)$. In \cite{Benkovic4}
Benkovi\v{c} studied generalized Jordan derivations and generalized
Lie derivations of the triangular algebra $\mathcal{T_R}$ and
investigated whether they also have the nice properties which are
similar to those of Jordan derivations and Lie derivations.

We need to point out that most of existing works related to Jordan
derivations and Lie derivations of matrix algebras heavily depend on
the faithful condition. For instance, the $(\mathcal{A,
B})$-bimodule $\mathcal{M}$ in $\mathcal{T_R}$ and $\mathcal{G_R}$
is always assumed to be faithful. There is a commonly consensus in
the study of related topics. It seems that the assumption concerning
the faithfulness is a very natural condition and that without it,
one hardly get any useful results. To the best of our knowledge
there are no any articles treating Jordan derivations and Lie
derivations of matrix algebras without the faithful assumption
except for \cite{Cheung}.  When studying some harder problems, it
sometimes occurs that even the assumption concerning faithfulness is
too weak and then the problem has to be approached via some stronger
assumptions, for example loyalty ($a\mathcal{M}b=0$ implies that
$a=0$ or $b=0$), which was already used when studying the Lie
isomorphisms on triangular algebras. Therefore we propose a
challenging question:
\begin{question}
Without the faithful assumption, what can we say about the Jordan
derivations and Lie derivations of matrix algebras?
\end{question}

Path algebras of quivers come up naturally in the study of tensor
algebras of bimodules over semisimple algebras. It is well known
that any finite dimensional basic $\mathcal{K}$-algebra is given by
a quiver with relations when $\mathcal{K}$ is  an algebraically
closed field. In \cite{GuoLi}, Guo and Li studied the Lie algebra of
differential operators on a path algebra $\mathcal {K}\Gamma$ and
related this Lie algebra to the algebraic and combinatorial
properties of the path algebra $\mathcal {K}\Gamma$. The main
purpose of this paper is to study Jordan derivations and Lie
derivations of a class of path algebras of quivers without oriented
cycles, which can be viewed as one-point extensions. The
distinguished feature of our work is that the faithful assumption is
removed. We prove that every Jordan derivation on a class of path
algebras of quivers without oriented cycles is a derivation and that
every Lie derivation on such kinds of algebras is of the standard
form $(\spadesuit)$.

\section{Path algebras and triangular algebras}
Let us give a quick review of path algebras of quivers and
triangular algebras. For more details, we refer the reader to
\cite{AuslanderReitenSmalo}.

\subsection{Path algebras}
A quiver $\Gamma$ is an oriented graph. Let us denote the set of
vertices by $\Gamma_0$ and denote the set of arrows between vertices
by $\Gamma_1$. Throughout this paper, we always assume that
$\Gamma_0$ and $\Gamma_1$ are both finite sets. In this case, we say
that the quiver $\Gamma=(\Gamma_0, \Gamma_1)$ is \textit{finite}. If
$\alpha$ is an arrow from the vertex $i$ to the vertex $j$, then we
write $s(\alpha)=i$ and $e(\alpha)=j$. A vertex $i$ is called a
\textit{sink} if there is no arrow $\alpha$ such that $s(\alpha)=i$
and is called a \textit{source} if there is no arrow $\alpha$ such
that $e(\alpha)=i$. A \textit{nontrivial path} in $\Gamma$ is an
ordered sequence of arrows $p=\alpha_n\cdots\alpha_1$ such that
$e(\alpha_m)=s(\alpha_{m+1})$ for each $1\leq m<n$. Define
$s(p)=s(\alpha_1)$ and $e(p)=e(\alpha_n)$. A \textit{trivial path}
is the symbol $e_i$ for each $i\in \Gamma_0$. In this case, we set
$s(e_i)=e(e_i)=i$. A nontrivial path $p$ is called an
\textit{oriented cycle} if $s(p)=e(p)$. Denote the set of all paths
by $\mathscr{P}$.

Let $\mathcal{K}$ be a field and $\Gamma$ be a quiver. Then the path
algebra ${\mathcal K}\Gamma$ is the ${\mathcal K}$-algebra generated
by the paths in $\Gamma$ and the product of two paths
$x=\alpha_n\cdots\alpha_1$ and $y=\beta_t\cdots\beta_1$ is defined
by
$$
xy=\left\{
\begin{array}{ll}
\alpha_n\cdots\alpha_1\beta_t\cdots\beta_1, & \mbox{$e(y)=s(x)$}\\
0, & \mbox{otherwise}
\end{array}
\right.
$$
Clearly, ${\mathcal K}\Gamma$ is an associative algebra with the
identity $1=\sum_{i\in \Gamma_0}e_i$, where $e_i(i\in \Gamma_0)$ are
pairwise orthogonal primitive idempotents of ${\mathcal K}\Gamma$.

A relation $\sigma$ on a quiver $\Gamma$ over a field ${\mathcal K}$
is a ${\mathcal K}$-linear combination of paths
$\sigma=\sum_{i=1}^nk_ip_i$, where $k_i\in {\mathcal K}$ and
$e(p_1)=\cdots=e(p_n)$, $s(p_1)=\cdots=s(p_n)$. Moreover, the number
of arrows in each path is assumed to be at least 2. Let $\rho$ be a
set of relations on $\Gamma$ over ${\mathcal K}$. The pair $(\Gamma,
\rho)$ is called a \textit{quiver} with relations over ${\mathcal
K}$. Denote by $<\rho>$ the ideal of ${\mathcal K}\Gamma$ generated
by the set of relations $\rho$. The ${\mathcal K}$-algebra
${\mathcal K}(\Gamma, \rho)={\mathcal K}\Gamma/<\rho>$ is always
associated with $(\Gamma, \rho)$. For arbitrary element $x\in
{\mathcal K}\Gamma$, write by $\overline x$ the corresponding
element in ${\mathcal K}(\Gamma, \rho)$. It is well known that every
basic finite dimensional algebra over an algebraically closed field
${\mathcal K}$ is isomorphic to some ${\mathcal K}(\Gamma, \rho)$.

\begin{example}
Let ${\mathcal K}$ be a field and $\Gamma$ be the following quiver
$$\xymatrix@C=13mm{
  \bullet
  \ar@<0pt>[r]^(0){1}^{\alpha_1}  &
  \bullet
  \ar@<0pt>[r]^(0.3){\alpha_2}^(0){2}^(0.7){3}&\bullet\,\,\,
  \cdots\cdots\,\,\,
  \bullet
  \ar@<0pt>[r]^(0.6){\alpha_{n-1}}^(0.3){n-1}^(0.99){n}&
  \bullet}
  $$
  Then ${\mathcal K}\Gamma$ is isomorphic to the upper triangular matrix
  algebra $T_n(K)$
\end{example}

\begin{example}
Let ${\mathcal K}$ be a field and $\Gamma$ be the following quiver
$$\xymatrix@C=13mm{
  \bullet
  \ar@<0pt>[r]^(0){1}^{\alpha}  &
  \bullet
  \ar@<2.5pt>[r]^(0.5){\beta}^(0){2}^(1){3}
  \ar@<-2.5pt>[r]_(0.5){\gamma}&
  \bullet
  \ar[d]_(0.5){\zeta}
  \ar@<0pt>[r]^{\varepsilon}  &
  \bullet &
  \bullet
  \ar@<0pt>@[r][l]_(0){6}_(1){5}_{\eta} &\\
  & & \,\,\,\, \bullet_{4} .}
  $$
Then a basis of the path algebra ${\mathcal K}\Gamma$ is
$$
\{e_1,
e_2, e_3, e_4, e_5, e_6, \alpha, \beta, \gamma, \zeta, \varepsilon,
\eta, \beta\alpha, \gamma\alpha, \zeta\beta\alpha,
\zeta\gamma\alpha, \varepsilon\beta\alpha, \varepsilon\gamma\alpha,
\zeta\beta, \zeta\gamma, \varepsilon\beta, \varepsilon\gamma\}.
$$
If $\rho=\{\varepsilon\beta\}$, then a basis of ${\mathcal
K}(\Gamma, \rho)$ is
$$
\{\overline e_1,
\overline e_2, \overline e_3, \overline e_4, \overline e_5,
\overline e_6, \overline\alpha, \overline\beta, \overline\gamma,
\overline\zeta, \overline\varepsilon, \overline\eta,
\overline{\beta\alpha}, \overline\gamma\overline\alpha,
\overline{\zeta\beta\alpha}, \overline{\zeta\gamma\alpha},
\overline\varepsilon\overline\gamma\overline\alpha,
\overline\zeta\overline\beta, \overline{\zeta\gamma},
\overline\varepsilon\overline\gamma\}.$$
\end{example}

\subsection{Triangular algebras and one-point extension}
Let us begin with the definition of triangular algebras over a field
${\mathcal K}$. Let ${\mathcal K}$ be a field and $\mathcal{A}$ and
$B$ two ${\mathcal K}$-algebras. Let $_{\mathcal A}{\mathcal
M}_{\mathcal B}$ be an $({\mathcal A, B})$-bimodule. Note that
$\mathcal{M}$ need not to be faithful neither as left ${\mathcal
A}$-module nor as right ${\mathcal B}$-module here. Then one can
define
$$
\mathcal{T_K}=\left[
\begin{array}
[c]{cc}%
{\mathcal A} & {\mathcal M}\\
0 & {\mathcal B}\\
\end{array}
\right]=\left\{ \hspace{2pt} \left[
\begin{array}
[c]{cc}%
a & m\\
0 & b\\
\end{array}
\right] \hspace{2pt} \vline \hspace{2pt} a\in {\mathcal A}, b\in
{\mathcal B}, m\in {\mathcal M} \hspace{2pt} \right\}
$$
to be an associative algebra under matrix-like addition and
matrix-like multiplication. The center of $\mathcal{T_K}$ is
$$
\mathcal{Z(T_K)}=\left\{ \left[
\begin{array}
[c]{cc}%
a & 0\\
0 & b
\end{array}
\right] \vline \hspace{3pt}a\in \mathcal {Z}(\mathcal {A}), b\in
\mathcal {Z}(\mathcal {B}), am=mb, \ \forall\ m\in {\mathcal M}
\right\}.
$$

In particular, if $\mathcal{B}$ is equal to the field ${\mathcal
K}$, then the triangular algebra
$\mathcal{T_K}=\left[\smallmatrix {\mathcal A} & {\mathcal M}\\
0 & {\mathcal K} \endsmallmatrix \right]$ is called a
\textit{one-point extension} of ${\mathcal A}$ by the bimodule
$_{\mathcal A}{\mathcal M}_{\mathcal B}$. This terminology comes up
in connection with path algebras. For convenience, we explain the
reason here. Let $\Lambda={\mathcal K}(\Gamma, \rho)$ be a finite
dimensional path algebra of the quiver $(\Gamma, \rho)$ with
relations. Let $i$ be a source in $\Gamma$ and ${\overline e_i}$ the
corresponding idempotent in $\Lambda$. Note that $\Gamma$ is a
quiver without oriented cycles. Clearly, there are no nontrivial
paths ending in $i$. This implies that ${\overline e_i}\Lambda
{\overline e_i}\simeq {\mathcal K}$ and ${\overline e_i}\Lambda
(1-{\overline e_i})=0$. Therefore
$$
\Lambda\simeq \left[
\begin{array}
[c]{cc}%
(1-{\overline e_i})\Lambda(1-{\overline e_i}) & (1-{\overline e_i})\Lambda {\overline e_i}\\
0 & {\mathcal K}\\
\end{array}
\right].
$$
Let us denote by $(\Gamma', \rho')$ the quiver obtained by removing
the vertex $i$ and the relations starting at $i$ and write
$\Lambda'= {\mathcal K}(\Gamma', \rho')$. Then $(1-{\overline
e_i})\Lambda(1-{\overline e_i})\simeq\Lambda'$. Thus $\Lambda$ can
be constructed from $\Lambda'$ by adding one point $i$, together
with arrows and relations which start at $i$.

It is worth noting that $(1-{\overline e_i})\Lambda {\overline e_i}$
is not faithful as a left $\Lambda'$-module in general. We
illustrate an example here.

\begin{example}\label{xxsec2.2}
Let ${\mathcal K}$ be a field and $\Lambda={\mathcal K}\Gamma$ a
path algebra, where $\Gamma$ is the following quiver
$$
\xymatrix@C=13mm{
  \bullet
  \ar@<0pt>[r]_(0){1}^{\alpha}  &
  \bullet &
  \bullet
  \ar@<0pt>@[r][l]_(0.5){\beta}^(0){3}^(1){2}
  \ar@<2.5pt>[r]^(0.5){\gamma}_(1.1){4}
  \ar@<-2.5pt>[r]_(0.5){\delta} &
  \bullet }$$

Obviously, vertex 1 is a source in $\Gamma$. Then
$$\Lambda\simeq \left[
\begin{array}
[c]{cc}%
(1-e_1)\Lambda(1-e_1) & (1-e_1)\Lambda e_1\\
0 & {\mathcal K}\\
\end{array}
\right].
$$
It is easy to check that $(1-e_1)\Lambda e_1$ is not faithful as a
left $(1-e_1)\Lambda(1-e_1)$ module.

On the other hand, let us take $e=e_1+e_2$. It is easy to check that
$(1-e)\Lambda e=0$. Then the algebra $\mathcal {K}\Gamma$ can also
be viewed as a triangular algebra as follows:
$$\Lambda\simeq \left[
\begin{array}
[c]{cc}%
e\Lambda e & e\Lambda (1-e)\\
0 & (1-e)\Lambda(1-e)\\
\end{array}
\right].
$$
Clearly, $e\Lambda (1-e)$ is not faithful as left $e\Lambda
e$-module and as right $(1-e)\Lambda(1-e)$-module.
\end{example}

\section{Jordan Derivations on Path Algebras}\label{xxsec3}

Let us first recall some indispensable descriptions concerning derivations and Jordan
derivations of the triangular algebra $\mathcal{T_K}$.

\begin{lemma}\cite[Lemma 5]{Cheung}\label{xxsec3.1}
An $\mathcal{K}$-linear mapping $\Theta$ from $\mathcal{T_K}=\left[\smallmatrix {\mathcal A} & {\mathcal M}\\
0 & {\mathcal B} \endsmallmatrix \right]$ into itself is a derivation if and
only if it has the form
$$
\begin{aligned}
& \Theta\left(\left[
\begin{array}
[c]{cc}%
a & m\\
0 & b\\
\end{array}
\right]\right) = \left[
\begin{array}
[c]{cc}%
\delta_1(a) & am_0-m_0b+\tau_2(m)\\
0 & \mu_4(b)\\
\end{array}
\right] , \quad \forall \left[
\begin{array}
[c]{cc}%
a & m\\
0 & b\\
\end{array}
\right]\in \mathcal{T_K},
\end{aligned}
$$
where $m_0\in {\mathcal M}$ and
$$
\begin{aligned} \delta_1:& {\mathcal A} \longrightarrow {\mathcal A}, &
 \tau_2: & {\mathcal M}\longrightarrow {\mathcal M}, &
\mu_4: & {\mathcal B}\longrightarrow {\mathcal B}
\end{aligned}
$$
are all ${\mathcal K}$-linear mappings satisfying the following conditions
\begin{enumerate}
\item[(1)] $\delta_1$ is a derivation of ${\mathcal A}$ and $\mu_4$
is a derivation of ${\mathcal B}$.

\item[(2)] $\tau_2(am)=a\tau_{2}(m)+\delta_1(a)m$ and
$\tau_2(mb)=\tau_2(m)b+m\mu_4(b).$
\end{enumerate}
\end{lemma}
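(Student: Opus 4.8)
The plan is to prove both implications by exploiting the Peirce decomposition induced by the standard idempotent $e=\left[\begin{smallmatrix}1&0\\0&0\end{smallmatrix}\right]$ and its complement $f=1-e=\left[\begin{smallmatrix}0&0\\0&1\end{smallmatrix}\right]$, which satisfy $e\mathcal{T_K}e\cong\mathcal{A}$, $e\mathcal{T_K}f\cong\mathcal{M}$, $f\mathcal{T_K}f\cong\mathcal{B}$ and $f\mathcal{T_K}e=0$. The sufficiency (``if'') direction is a direct verification: given a map of the stated form, one substitutes two arbitrary elements $\left[\begin{smallmatrix}a&m\\0&b\end{smallmatrix}\right]$ and $\left[\begin{smallmatrix}a'&m'\\0&b'\end{smallmatrix}\right]$ into $\Theta(xy)$ and $\Theta(x)y+x\Theta(y)$ and checks equality entrywise; the diagonal entries reduce to $\delta_1,\mu_4$ being derivations (condition (1)), while the upper-right entry collapses to the two Leibniz-type identities of condition (2) together with the telescoping of the inner term $am_0-m_0b$. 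I expect this direction to be purely mechanical.

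For the necessity (``only if'') direction, I would first extract the element $m_0$. Since $\Theta(1)=2\Theta(1)$ forces $\Theta(1)=0$, applying $\Theta$ to $e^2=e$ and compressing by $e$ and by $f$ on either side gives $e\Theta(e)e=0$ and $f\Theta(e)f=0$; because the $(2,1)$ Peirce corner vanishes identically, this leaves $\Theta(e)=e\Theta(e)f$, that is $\Theta(e)=\left[\begin{smallmatrix}0&m_0\\0&0\end{smallmatrix}\right]$ for a uniquely determined $m_0\in\mathcal{M}$, and correspondingly $\Theta(f)=-\Theta(e)$. The core of the argument is then to compute $\Theta$ separately on each Peirce component $\hat a=eae$, $\hat m=emf$, $\hat b=fbf$ by feeding the relations $e\hat a=\hat a e=\hat a$, $f\hat m=\hat m e=0$, $\hat b f=f\hat b=\hat b$ into the derivation identity. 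For instance $0=\Theta(f\hat a)=\Theta(f)\hat a+f\Theta(\hat a)$ kills the lower corner of $\Theta(\hat a)$, while $\Theta(\hat a)=\Theta(\hat a e)=\Theta(\hat a)e+\hat a\Theta(e)$ produces the off-diagonal term $am_0$; one then defines $\delta_1(a)=e\Theta(\hat a)e$, $\mu_4(b)=f\Theta(\hat b)f$ and $\tau_2(m)=e\Theta(\hat m)f$, and the symmetric computation on $\hat b$ yields the term $-m_0b$ while $\hat m$ contributes nothing on the diagonal.

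Once these three maps are isolated, conditions (1) and (2) fall out by applying $\Theta$ to the products that stay inside, or move between, the components: $\widehat{aa'}=\hat a\hat a'$ shows $\delta_1$ is a derivation of $\mathcal{A}$ (and likewise $\widehat{bb'}=\hat b\hat b'$ for $\mu_4$), whereas $\widehat{am}=\hat a\hat m$ and $\widehat{mb}=\hat m\hat b$ yield, in the $(1,2)$ corner, exactly $\tau_2(am)=\delta_1(a)m+a\tau_2(m)$ and $\tau_2(mb)=\tau_2(m)b+m\mu_4(b)$. Assembling by linearity from $\left[\begin{smallmatrix}a&m\\0&b\end{smallmatrix}\right]=\hat a+\hat m+\hat b$ recovers the asserted closed form. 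I do not anticipate a genuinely hard step; the only real subtlety is the bookkeeping, namely keeping track of which compressions are forced to vanish and correctly attributing the two inner pieces $am_0$ and $-m_0b$ to $\Theta(\hat a)$ and $\Theta(\hat b)$ respectively. Notably, the argument never invokes faithfulness of $\mathcal{M}$, which is precisely what makes it applicable to the non-faithful one-point-extension setting studied in this paper.
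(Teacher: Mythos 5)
Your argument is correct and complete: the Peirce decomposition with respect to $e=\left[\begin{smallmatrix}1&0\\0&0\end{smallmatrix}\right]$, the extraction of $m_0=e\Theta(e)f$ from $\Theta(e^2)=\Theta(e)$, and the component-wise Leibniz computations all check out, and the ``if'' direction is indeed a mechanical verification. The paper itself states this lemma without proof, citing Cheung's Lemma 5, and your proof is essentially the standard argument given there, so there is nothing to add.
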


\begin{lemma}\cite[Lemma 3.2]{AB}\label{xxsec3.2}
Let ${\mathcal K}$ be a field with ${\rm Char} {\mathcal K}\neq 2$.
An $\mathcal{K}$-linear
mapping $\Theta$ from $\mathcal{T_K}=\left[\smallmatrix {\mathcal A} & {\mathcal M}\\
0 & {\mathcal B} \endsmallmatrix \right]$ into itself is a Jordan
derivation if and only if it has the form
$$
\begin{aligned}
& \Theta\left(\left[
\begin{array}
[c]{cc}%
a & m\\
0 & b\\
\end{array}
\right]\right) = \left[
\begin{array}
[c]{cc}%
\delta_1(a) & am_0-m_0b+\tau_2(m)\\
0 & \mu_4(b)\\
\end{array}
\right] ,\quad \forall \left[
\begin{array}
[c]{cc}%
a & m\\
0 & b\\
\end{array}
\right]\in \mathcal{T_K},
\end{aligned}
$$
where $m_0\in {\mathcal M}$ and
$$
\begin{aligned} \delta_1:& {\mathcal A} \longrightarrow {\mathcal A},  &  \tau_2:
& {\mathcal M}\longrightarrow {\mathcal M}, &  \mu_4: & {\mathcal B}\longrightarrow {\mathcal B}
\end{aligned}
$$
are all ${\mathcal K}$-linear mappings satisfying conditions
\begin{enumerate}
\item [(1)] $\delta_1$ is a Jordan derivation of ${\mathcal A}$ and
$\mu_4$ is a Jordan derivation of ${\mathcal B}$.

\item[(2)] $\tau_2(am)=a\tau_2(m)+\delta_1(a)m$ and
$\tau_2(mb)=\tau_2(m)b+m\mu_4(b).$
\end{enumerate}
\end{lemma}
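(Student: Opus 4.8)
The plan is to run the standard Peirce-decomposition argument relative to the two diagonal idempotents of $\mathcal{T_K}$, but to invoke only the symmetric Jordan identity $\Theta(x\circ y)=\Theta(x)\circ y+x\circ\Theta(y)$ (equivalently, since ${\rm Char}\,\mathcal{K}\neq 2$, the squaring form $\Theta(x^2)=\Theta(x)x+x\Theta(x)$), never the full Leibniz rule. Write $e=\left[\smallmatrix 1 & 0\\ 0 & 0\endsmallmatrix\right]$ and $f=1-e=\left[\smallmatrix 0 & 0\\ 0 & 1\endsmallmatrix\right]$, and identify $\mathcal{A}$, $\mathcal{M}$, $\mathcal{B}$ with $e\mathcal{T_K}e$, $e\mathcal{T_K}f$, $f\mathcal{T_K}f$ respectively. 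The sufficiency (``if'') direction is a direct, if tedious, verification: substituting the displayed form into $\Theta(x\circ y)$ and into $\Theta(x)\circ y+x\circ\Theta(y)$ and comparing entries, one sees that the identity reduces exactly to conditions (1) and (2). So the substance lies in the necessity direction.

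First I would determine $\Theta(e)$. Writing $\Theta(e)=\left[\smallmatrix p & m_0\\ 0 & q\endsmallmatrix\right]$, the relation $\Theta(e)=\Theta(e)e+e\Theta(e)$ obtained from the squaring identity forces $p=q=0$, leaving $\Theta(e)=\left[\smallmatrix 0 & m_0\\ 0 & 0\endsmallmatrix\right]$ with $m_0\in\mathcal{M}$ free; this $m_0$ is precisely the element appearing in the statement. Next I would feed the three types of homogeneous elements through $\circ\,e$: for $\hat a=e\hat a e$ one has $\hat a\circ e=2\hat a$, for $\hat b=f\hat b f$ one has $\hat b\circ e=0$, and for $\hat m=e\hat m f$ one has $\hat m\circ e=\hat m$. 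Each of these three instances of the Jordan identity, combined with the now-known $\Theta(e)$, pins down the image: it shows that $\Theta$ sends $\hat a$ to $\left[\smallmatrix \delta_1(a) & am_0\\ 0 & 0\endsmallmatrix\right]$, sends $\hat b$ to $\left[\smallmatrix 0 & -m_0 b\\ 0 & \mu_4(b)\endsmallmatrix\right]$, and sends $\hat m$ to $\left[\smallmatrix 0 & \tau_2(m)\\ 0 & 0\endsmallmatrix\right]$, killing all the unwanted off-corner components. Summing over $x=\hat a+\hat m+\hat b$ then yields the displayed form.

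It remains to extract conditions (1) and (2). Applying the squaring identity to $\hat a$ (noting $\hat a^2=\widehat{a^2}$) and reading off the $(1,1)$ entry gives $\delta_1(a^2)=\delta_1(a)a+a\delta_1(a)$, so $\delta_1$ is a Jordan derivation of $\mathcal{A}$; the symmetric computation with $\hat b$ handles $\mu_4$. For condition (2) I would use the mixed products $\hat a\circ\hat m=\widehat{am}$ and $\hat m\circ\hat b=\widehat{mb}$ (both collapse to a single term because $\mathcal{M}^2=0$ and $\hat m\hat a=\hat b\hat m=0$); comparing the $(1,2)$ entries delivers $\tau_2(am)=a\tau_2(m)+\delta_1(a)m$ and $\tau_2(mb)=\tau_2(m)b+m\mu_4(b)$ respectively.

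The point I would stress---and the reason this lemma fits the paper's faithfulness-free program---is that no step above ever uses faithfulness of $\mathcal{M}$. The idempotent $e$ carries the whole argument: the vanishing of the superfluous components and the emergence of $m_0$ come from equating matrix entries, not from cancelling a factor against a faithful module action. Thus the only real care required is bookkeeping---retaining the symmetric identity (rather than the Leibniz rule) throughout, and correctly handling the ${\rm Char}\,\mathcal{K}\neq 2$ reduction---so that $\delta_1,\mu_4$ are concluded to be Jordan (not necessarily associative) derivations, exactly as in condition (1). There is no serious obstacle; the only contrast with the derivation case of Lemma~\ref{xxsec3.1} is that the full product $\hat a\,\widehat{a'}$ is unavailable, which is precisely why one obtains Jordan derivations of the corners rather than genuine derivations.
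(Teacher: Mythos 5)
The paper gives no proof of this lemma: it is quoted verbatim from the reference \cite{AB} (Lemma 3.2 there), so there is nothing in the source to compare against line by line. Your reconstruction is correct and is the standard argument one expects behind such a statement: the Peirce decomposition with respect to $e=\left[\smallmatrix 1&0\\0&0\endsmallmatrix\right]$, the identity $\Theta(e)=\Theta(e)e+e\Theta(e)$ to locate $m_0$ and kill the diagonal of $\Theta(e)$, the three instances of $\circ\,e$ (with eigenvalues $2$, $0$, $1$ on the corners and ${\mathcal M}$) to pin down $\Theta$ on each Peirce component, and the products $\hat a^2$, $\hat a\circ\hat m$, $\hat m\circ\hat b$ to extract conditions (1) and (2); the use of ${\rm Char}\,{\mathcal K}\neq 2$ in the step $2x=0\Rightarrow x=0$ and in passing to the squaring form is exactly where the hypothesis enters, and your observation that faithfulness of ${\mathcal M}$ is never needed is the point the authors themselves emphasize. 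I checked the converse computation as well (the $(1,2)$ entry of $\Theta(x)\circ y+x\circ\Theta(y)$ collapses to $\Theta(x\circ y)_{12}$ using only condition (2)), so describing it as a routine verification is accurate.
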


It should be remarked that if  $\Gamma$ is a quiver without oriented
cycles, then the path algebra ${\mathcal K}(\Gamma, \rho)$ can be
viewed as a one-point extension algebra. We now give the form of
Jordan derivations in this background.

\begin{lemma}\label{xxsec3.3}
Let $\mathcal{T_K}=\left[\smallmatrix {\mathcal A} & {\mathcal M}\\
0 & {\mathcal K} \endsmallmatrix \right]$ be a one-point extension
of $\mathcal {A}$ by the bimodule $_\mathcal {A}\mathcal
{M}_\mathcal {K}$ with ${\mathcal M}\neq 0$ and $\Theta$ be a Jordan
derivation of $\mathcal{T_K}$. Then $\Theta$ has the form
$$
\begin{aligned}
& \Theta\left(\left[
\begin{array}
[c]{cc}%
a & m\\
0 & k\\
\end{array}
\right]\right) = \left[
\begin{array}
[c]{cc}%
\delta_1(a) & am_0-m_0k+\tau_2(m)\\
0 & 0\\
\end{array}
\right] ,\quad \forall \left[
\begin{array}
[c]{cc}%
a & m\\
0 & k\\
\end{array}
\right]\in \mathcal{T_K},
\end{aligned}
$$
where $m_0\in {\mathcal M}$ and both $\delta_1: {\mathcal
A}\longrightarrow {\mathcal A}$ and $\tau_2: {\mathcal
M}\longrightarrow {\mathcal M}$ are ${\mathcal K}$-linear mappings
satisfying conditions
\begin{enumerate}
\item [(1)] $\delta_1$ is a Jordan derivation of ${\mathcal A}$.

\item [(2)] $\tau_2(am)=a\tau_2(m)+\delta_1(a)m.$
\end{enumerate}
\end{lemma}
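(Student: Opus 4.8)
The plan is to deduce Lemma \ref{xxsec3.3} as a specialization of Lemma \ref{xxsec3.2} to the case $\mathcal{B}=\mathcal{K}$, since a one-point extension is precisely the triangular algebra obtained by setting the lower-right corner equal to the ground field. Note that Lemma \ref{xxsec3.2} carries no faithfulness hypothesis, so it applies directly here. Invoking it, every Jordan derivation $\Theta$ of $\mathcal{T_K}$ already has the displayed form, determined by a $\mathcal{K}$-linear Jordan derivation $\delta_1$ of $\mathcal{A}$, a $\mathcal{K}$-linear Jordan derivation $\mu_4$ of $\mathcal{K}$, and a $\mathcal{K}$-linear map $\tau_2$ of $\mathcal{M}$ subject to the two compatibility relations of its condition (2). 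It then remains only to simplify this data when $\mathcal{B}=\mathcal{K}$: I expect to prove that $\mu_4\equiv 0$ and that the second compatibility relation becomes vacuous.

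First I would show $\mu_4=0$. Since $\mu_4$ is $\mathcal{K}$-linear, one has $\mu_4(k)=k\,\mu_4(1)$ for every $k\in\mathcal{K}$, so it suffices to see $\mu_4(1)=0$. Applying the Jordan derivation identity to $1\circ 1=2\cdot 1$ gives $\mu_4(1\circ 1)=2\mu_4(1)+2\mu_4(1)=4\mu_4(1)$, while linearity gives $\mu_4(2\cdot 1)=2\mu_4(1)$; comparing and using ${\rm Char}\,\mathcal{K}\neq 2$ (inherited from Lemma \ref{xxsec3.2}) forces $\mu_4(1)=0$, whence $\mu_4\equiv 0$. This is the only step carrying genuine content. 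With $\mu_4=0$, the bottom-right entry $\mu_4(b)$ collapses to $0$ and the off-diagonal term $am_0-m_0b$ becomes $am_0-m_0k$, matching the asserted form.

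Finally I would dispose of the second relation in condition (2). Substituting $\mathcal{B}=\mathcal{K}$ and $\mu_4=0$, the relation $\tau_2(mb)=\tau_2(m)b+m\,\mu_4(b)$ reads $\tau_2(mk)=\tau_2(m)k$ for $k\in\mathcal{K}$, which holds automatically because $\tau_2$ is $\mathcal{K}$-linear and $k$ acts on $\mathcal{M}$ as a scalar. Hence this relation imposes no extra constraint and can be dropped, leaving only $\tau_2(am)=a\tau_2(m)+\delta_1(a)m$, as claimed; the converse direction is checked by reversing these steps. No real obstacle arises, the whole argument being a short reduction whose single nontrivial ingredient is the computation $\mu_4(1)=0$. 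The hypothesis $\mathcal{M}\neq 0$ only serves to keep the one-point extension genuinely nontrivial and is not otherwise needed.
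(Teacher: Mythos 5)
Your proposal is correct, and its overall skeleton (specialize Lemma \ref{xxsec3.2} to $\mathcal{B}=\mathcal{K}$, then show $\mu_4=0$ and observe that the second relation in condition (2) becomes automatic) matches the paper. The one step that carries content is handled differently, though. The paper does not look at $\mu_4$ intrinsically: it compares the relation $\tau_2(mk)=\tau_2(m)k+m\mu_4(k)$ from Lemma \ref{xxsec3.2}(2) with the identity $\tau_2(mk)=\tau_2(m)k$ forced by $\mathcal{K}$-linearity of $\tau_2$, concluding $m\mu_4(k)=0$ for all $m\in\mathcal{M}$, and then invokes $\mathcal{M}\neq 0$ (so that $\mu_4(k)$, being a scalar annihilating a nonzero vector space, must vanish). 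You instead prove directly that any $\mathcal{K}$-linear Jordan derivation of the field $\mathcal{K}$ is zero, via $\mu_4(1\circ 1)=4\mu_4(1)$ versus $\mu_4(2\cdot 1)=2\mu_4(1)$ and ${\rm Char}\,\mathcal{K}\neq 2$. Both arguments are valid. Yours is arguably cleaner in that it makes no use of $\mathcal{M}$ at all --- so, as you note, the hypothesis $\mathcal{M}\neq 0$ is not needed on your route, which gives a marginally stronger statement; the price is that you lean on ${\rm Char}\,\mathcal{K}\neq 2$ for this step, whereas the paper's route uses that hypothesis only to be able to invoke Lemma \ref{xxsec3.2} in the first place. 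Two minor remarks: the characteristic assumption is indeed implicit in the statement of Lemma \ref{xxsec3.3} (it is inherited from Lemma \ref{xxsec3.2}, as you say), and there is no converse to check, since the lemma is stated in one direction only.
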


\begin{proof}
Clearly, by Lemma  \ref{xxsec3.2}, we only need to show $\mu_4(k)=0$
for all $k\in {\mathcal K}$. In fact, it follows from $\tau_2$ being
${\mathcal K}$-linear that $\tau_2(mk)=\tau_2(m)k$ for all $m\in
{\mathcal M}$ and $k\in {\mathcal K}$. Then the condition (2) of
Lemma \ref{xxsec3.2} implies that $m\mu_4(k)=0$ for all $k\in
{\mathcal K}$ and $m\in {\mathcal M}$. Note that ${\mathcal M}$ is a
nonzero ${\mathcal K}$-vector space. This forces $\mu_4(k)=0$ for
all $k\in {\mathcal K}$.
\end{proof}

Now we are in a position to state the main result of this section.

\begin{theorem}\label{xxsec3.4}
Let ${\mathcal K}$ be a field with ${\rm Char} {\mathcal K} \neq 2$
and $\Lambda={\mathcal K}(\Gamma, \rho)$ a finite dimensional path
algebra over ${\mathcal K}$ of the quiver $(\Gamma, \rho)$ with
relations. Then every Jordan derivation on $\Lambda$ is a
derivation.
\end{theorem}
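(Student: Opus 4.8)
The plan is to argue by induction on the number of vertices $n=|\Gamma_0|$, exploiting the one-point extension decomposition described in Subsection 2.2. Since $\Gamma$ has no oriented cycles, a finite acyclic quiver always possesses a source $i\in\Gamma_0$; I would fix such a vertex and let $\overline{e}_i$ be the associated idempotent. As recorded earlier, there are no nontrivial paths ending at $i$, so $\overline{e}_i\Lambda\overline{e}_i\simeq\mathcal K$ and $\overline{e}_i\Lambda(1-\overline{e}_i)=0$, whence
$$
\Lambda\simeq\left[\begin{array}{cc}(1-\overline{e}_i)\Lambda(1-\overline{e}_i) & (1-\overline{e}_i)\Lambda\overline{e}_i\\ 0 & \mathcal K\end{array}\right]=\left[\begin{array}{cc}\mathcal A & \mathcal M\\ 0 & \mathcal K\end{array}\right],
$$
where $\mathcal A=(1-\overline{e}_i)\Lambda(1-\overline{e}_i)\simeq\Lambda'=\mathcal K(\Gamma',\rho')$ and $\mathcal M=(1-\overline{e}_i)\Lambda\overline{e}_i$. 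Here $(\Gamma',\rho')$ is the acyclic quiver obtained by deleting $i$, so $\Lambda'$ is again a finite dimensional path algebra of a quiver without oriented cycles, but on $n-1$ vertices.

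For the base case $n=1$ there are no arrows, so $\Lambda\simeq\mathcal K$ and the only $\mathcal K$-linear Jordan derivation is the zero map, which is trivially a derivation. For the inductive step, let $\Theta$ be a Jordan derivation of $\Lambda$. When $\mathcal M\neq 0$, I would invoke Lemma \ref{xxsec3.3} directly, which gives
$$
\Theta\!\left(\left[\begin{array}{cc}a & m\\ 0 & k\end{array}\right]\right)=\left[\begin{array}{cc}\delta_1(a) & am_0-m_0k+\tau_2(m)\\ 0 & 0\end{array}\right],
$$
where $m_0\in\mathcal M$, $\delta_1$ is a Jordan derivation of $\mathcal A$, and $\tau_2(am)=a\tau_2(m)+\delta_1(a)m$. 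If instead $\mathcal M=0$, then $i$ is isolated and $\Lambda\simeq\mathcal K\times\Lambda'$; here one checks directly that $\Theta$ annihilates the central idempotents and so splits into Jordan derivations of the two factors, reducing again to the field $\mathcal K$ and to $\Lambda'$.

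The decisive observation is that the form produced by Lemma \ref{xxsec3.3} coincides with the form of a derivation in Lemma \ref{xxsec3.1} except for one discrepancy: Lemma \ref{xxsec3.1} requires $\delta_1$ to be an honest derivation of $\mathcal A$, while Lemma \ref{xxsec3.3} only guarantees it to be a Jordan derivation. All the remaining hypotheses of Lemma \ref{xxsec3.1} are already met: the condition $\tau_2(am)=a\tau_2(m)+\delta_1(a)m$ is identical in both lemmas, the diagonal map $\mu_4=0$ is trivially a derivation of $\mathcal K$, and the right-hand relation $\tau_2(mk)=\tau_2(m)k+m\mu_4(k)=\tau_2(m)k$ is automatic from the $\mathcal K$-linearity of $\tau_2$. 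Since $\mathcal A\simeq\Lambda'$ is a path algebra of an acyclic quiver on $n-1$ vertices, the inductive hypothesis upgrades $\delta_1$ from a Jordan derivation to a genuine derivation of $\mathcal A$. With $\delta_1$ now a derivation, $\Theta$ satisfies all the conditions of Lemma \ref{xxsec3.1}, so $\Theta$ is a derivation, completing the induction.

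I expect the only genuinely delicate point to be the bookkeeping in the reduction rather than any hard computation: one must verify that deleting a source really yields a path algebra of the same type (an acyclic quiver with relations) so that the inductive hypothesis is applicable, and that the degenerate case $\mathcal M=0$ is not overlooked. Once the structural decomposition is in place, the argument is essentially a comparison of the two normal forms in Lemmas \ref{xxsec3.1} and \ref{xxsec3.3}, with the induction supplying exactly the missing promotion of $\delta_1$ from a Jordan derivation to a derivation.
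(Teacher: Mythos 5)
Your proposal is correct and follows essentially the same route as the paper: find a source, use the one-point extension decomposition, compare the normal forms of Lemmas \ref{xxsec3.1} and \ref{xxsec3.3} to reduce the claim to $\Lambda'$, and iterate/induct down to the field $\mathcal K$. You actually spell out two points the paper leaves implicit (why the comparison of the two lemmas does the job, and how the degenerate case $\mathcal M=0$ splits as a product), but the underlying argument is the same.
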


\begin{proof}
If the quiver only contains one vertex, then $\Lambda\simeq
{\mathcal K}$. Since ${\mathcal K}$ is a field, every Jordan
derivation on ${\mathcal K}$ is a derivation.

Now suppose that the number of vertices in $\Gamma$ is not less than
2. Note that $\Gamma$ is a quiver without oriented cycles. Take a
source $i$ in $\Gamma$ and let $\overline e_i$ be the corresponding
idempotent in $\Lambda$. Then we have
$$
\Lambda\simeq \left[
\begin{array}
[c]{cc}%
(1-{\overline e_i})\Lambda(1-{\overline e_i}) & (1-{\overline e_i})\Lambda {\overline e_i}\\
0 & {\mathcal K}\\
\end{array}
\right].
$$

Let us denote $(1-{\overline e_i})\Lambda(1-{\overline e_i})$ by
$\Lambda'$. We assert that each Jordan derivation on $\Lambda$ is a
derivation if and only if each Jordan derivation on $\Lambda'$ is a
derivation. In fact, the assertion holds true when $(1-{\overline
e_i})\Lambda {\overline e_i}=0$. If $(1-{\overline e_i})\Lambda
{\overline e_i}\neq 0$, then the assertion follows Lemma
\ref{xxsec3.1} and Lemma \ref{xxsec3.3}. Then it is sufficient to
determine whether every Jordan derivation on $\Lambda'$ is a
derivation. Note that $\Lambda'\simeq {\mathcal K}(\Gamma', \rho')$,
where $(\Gamma', \rho')$ is obtained by removing the vertex $i$ from
$\Gamma$. We continuously repeat this process and ultimately arrive
at the algebra ${\mathcal K}$ after finite times, since $\Gamma_0$
is a finite set. Clearly, every Jordan derivation on ${\mathcal K}$
is a derivation. This completes the proof.
\end{proof}

\begin{corollary}\label{xxsec3.5}
Let ${\mathcal K}$ be a field with ${\rm Char} {\mathcal K} \neq 2$
and $\Lambda={\mathcal K}(\Gamma, \rho)$ a finite dimensional path
algebra of the quiver $(\Gamma, \rho)$ with relations. Suppose that
$\Theta$ is a derivation of $\Lambda$ with $\Theta(x)\in \mathcal
{Z}(\Lambda)$ for all $x\in \Lambda$. Then $\Theta=0$.
\end{corollary}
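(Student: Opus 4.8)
The plan is to reduce everything to the algebra generators of $\Lambda$. Recall that $\Lambda=\mathcal{K}(\Gamma,\rho)$ is generated as a $\mathcal{K}$-algebra by the images $\overline e_i$ of the trivial paths together with the images $\overline\alpha$ of the arrows $\alpha\in\Gamma_1$. Since a derivation is determined on products by the Leibniz rule and is $\mathcal{K}$-linear, it suffices to prove that $\Theta(\overline e_i)=0$ for every vertex $i$ and $\Theta(\overline\alpha)=0$ for every arrow $\alpha$; the vanishing on an arbitrary path $\overline p=\overline\alpha_n\cdots\overline\alpha_1$ then propagates through the product rule, and linearity finishes the job.

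First I would dispose of the idempotents. Fix a vertex $i$ and put $z=\Theta(\overline e_i)$, which lies in $\mathcal{Z}(\Lambda)$ by hypothesis. Applying $\Theta$ to $\overline e_i=(\overline e_i)^2$ gives $z=z\overline e_i+\overline e_i z$, and since $z$ is central the two summands coincide, so $z=2\overline e_i z$. Multiplying this identity on the left by $\overline e_i$ and using $(\overline e_i)^2=\overline e_i$ yields $\overline e_i z=2\overline e_i z$, hence $\overline e_i z=0$ and therefore $z=0$. Thus $\Theta(\overline e_i)=0$ for all $i$. (Note that this step does not even require $\mathrm{Char}\,\mathcal{K}\neq 2$.)

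Next come the arrows. For an arrow $\alpha$ with $s(\alpha)=i$ and $e(\alpha)=j$ we have $\overline\alpha=\overline e_j\,\overline\alpha\,\overline e_i$. Expanding $\Theta(\overline e_j\,\overline\alpha\,\overline e_i)$ by the Leibniz rule and using $\Theta(\overline e_i)=\Theta(\overline e_j)=0$ collapses it to $\Theta(\overline\alpha)=\overline e_j\,\Theta(\overline\alpha)\,\overline e_i$. Now $\Theta(\overline\alpha)\in\mathcal{Z}(\Lambda)$, and for any central $z$ one has $\overline e_j z\,\overline e_i=\overline e_j\,\overline e_i z=0$ whenever $i\neq j$, by orthogonality of the idempotents. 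Because $\Gamma$ has no oriented cycles, every arrow satisfies $s(\alpha)\neq e(\alpha)$, that is $i\neq j$, so $\Theta(\overline\alpha)=0$. Combined with the previous paragraph and the reduction in the first paragraph, this gives $\Theta=0$.

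The computation itself is short; the points that need care are exactly the two structural facts feeding into it. The first is that a central element is trapped to zero between distinct orthogonal idempotents, which is what converts ``$\Theta(\overline\alpha)$ is central'' into ``$\Theta(\overline\alpha)=0$''. The second, and the real place the hypotheses enter, is that the absence of oriented cycles forces $s(\alpha)\neq e(\alpha)$ for every arrow, so that no arrow is ``diagonal''; were loops or oriented cycles allowed, a central image on such an element need not vanish and the argument would break. I would also double-check that passing to the quotient $\mathcal{K}\Gamma/\langle\rho\rangle$ preserves both the idempotent relations and the generation statement, but the relations play no active role beyond this.
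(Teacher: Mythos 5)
Your proof is correct, but it takes a genuinely different route from the paper's. The paper proves this corollary by feeding the derivation through the recursive one-point-extension picture: it writes $\Lambda$ as $\left[\smallmatrix \Lambda' & (1-\overline e_i)\Lambda\overline e_i\\ 0 & \mathcal K\endsmallmatrix\right]$ for a source $i$, uses Lemma \ref{xxsec3.3} and Theorem \ref{xxsec3.4} to pin down the matrix form of $\Theta$, observes that centrality of the image kills the off-diagonal entry and reduces the question to the induced derivation $\delta_1$ of $\Lambda'$ landing in $\mathcal Z(\Lambda')$, and then iterates down to $\mathcal K$, where every derivation vanishes. You instead argue directly on the algebra generators: the idempotent identity $z=2\overline e_i z$ combined with centrality forces $\Theta(\overline e_i)=0$, and $\overline\alpha=\overline e_j\,\overline\alpha\,\overline e_i$ with $i\neq j$ traps the central element $\Theta(\overline\alpha)$ between orthogonal idempotents, forcing it to zero; Leibniz and linearity then propagate the vanishing to all of $\Lambda$. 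Your argument is more elementary and self-contained --- it bypasses the structure lemmas entirely, does not use ${\rm Char}\,\mathcal K\neq 2$, and in fact only needs that no arrow is a loop (i.e.\ $s(\alpha)\neq e(\alpha)$ for each arrow), which is weaker than the standing no-oriented-cycles hypothesis --- whereas the paper's proof has the advantage of reusing machinery it has already built and of staying inside the inductive framework that organizes the rest of the section. Both arguments are valid; the only point you should make explicit is the one you already flag, namely that the images $\overline e_i$, $\overline\alpha$ generate the quotient $\mathcal K\Gamma/\langle\rho\rangle$ and retain their orthogonality relations there, which holds because the relations in $\rho$ involve only paths of length at least two.
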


\begin{proof}
Let $i$ be a source in $\Gamma$. Then $\Lambda={\mathcal K}(\Gamma,
\rho)\simeq \left[
\begin{array}
[c]{cc}%
\Lambda' & (1-{\overline e_i})\Lambda {\overline e_i}\\
0 & {\mathcal K}\\
\end{array}
\right].$ Let $\Theta$ be a derivation of $\Lambda$. By Lemma
\ref{xxsec3.3} and Theorem \ref{xxsec3.4} it follows that $\Theta$
has the form
$$
\begin{aligned}
& \Theta\left(\left[
\begin{array}
[c]{cc}%
a & m\\
0 & k\\
\end{array}
\right]\right) = \left[
\begin{array}
[c]{cc}%
\delta_1(a) & am_0-m_0k+\tau_2(m)\\
0 & 0\\
\end{array}
\right] ,\quad \forall \left[
\begin{array}
[c]{cc}%
a & m\\
0 & k\\
\end{array}
\right]\in \Lambda,
\end{aligned}
$$
where $m_0\in (1-{\overline e_i})\Lambda {\overline e_i}$ and
$\delta_1: \Lambda^\prime\longrightarrow \Lambda^\prime, \tau_2:
(1-{\overline e_i})\Lambda {\overline e_i} \longrightarrow
(1-{\overline e_i})\Lambda {\overline e_i}$ are all ${\mathcal
K}$-linear mappings satisfying conditions
\begin{enumerate}
\item [(1)] $\delta_1$ is a derivation of $\Lambda^\prime$.

\item [(2)] $\tau_2(am)=a\tau_2(m)+\delta_1(a)m.$
\end{enumerate}

Assume that $\Theta(x)\in \mathcal {Z}(\Lambda)$ for all $x\in
\Lambda$. Then
$$
\begin{aligned}
& \Theta\left(\left[
\begin{array}
[c]{cc}%
a & m\\
0 & b\\
\end{array}
\right]\right) = \left[
\begin{array}
[c]{cc}%
\delta_1(a) & 0\\
0 & 0\\
\end{array}
\right] ,\quad \forall \left[
\begin{array}
[c]{cc}%
a & m\\
0 & b\\
\end{array}
\right]\in \Lambda,
\end{aligned}
$$
where $\delta_1(x)\in \mathcal {Z}(\Lambda')$ for all
$x\in\Lambda'$. This implies that $\Theta\neq 0$ if and only if
$\delta_1\neq 0$. Since $\Gamma_0$ is a finite set, repeat this
process finite times we obtain that $\Theta\neq 0$ if and only if
some derivation on $K$ is nonzero. However, every derivation on $K$
is zero. This forces $\Theta$ to be zero.
\end{proof}

Moreover, the proof of Theorem \ref{xxsec3.4} implies that one-point
extension preserves the property of every Jordan derivation being a
derivation.

\begin{corollary}
Let $\mathcal {K}$ be a field and $\mathcal {A}$ a finite
dimensional $\mathcal {K}$-algebra with every Jordan derivation
being a derivation. Let $\Lambda$ be a one-point extension of
$\mathcal {A}$ by the bimodule $_\mathcal {A}\mathcal {M}_\mathcal
{K}$. Then every Jordan derivation of $\Lambda$ is a derivation.
\end{corollary}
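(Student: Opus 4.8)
The plan is to realize the one-point extension as the triangular algebra $\Lambda=\left[\smallmatrix \mathcal{A} & \mathcal{M}\\ 0 & \mathcal{K}\endsmallmatrix\right]$ and to obtain the conclusion by directly matching the description of Jordan derivations in Lemma \ref{xxsec3.3} against the description of derivations in Lemma \ref{xxsec3.1}. Since Lemma \ref{xxsec3.3} is stated only for $\mathcal{M}\neq 0$, I would split the argument into the two cases $\mathcal{M}\neq 0$ and $\mathcal{M}=0$. Throughout I would keep the standing hypothesis ${\rm Char}\,\mathcal{K}\neq 2$ inherited from Lemmas \ref{xxsec3.2}--\ref{xxsec3.3}, as dividing by $2$ is used both to invoke the Jordan-derivation structure and in the central-idempotent computation below.

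For the case $\mathcal{M}\neq 0$, let $\Theta$ be a Jordan derivation of $\Lambda$. By Lemma \ref{xxsec3.3}, $\Theta$ has the stated block form with $(2,2)$-entry zero, with $\delta_1$ a Jordan derivation of $\mathcal{A}$, and with $\tau_2(am)=a\tau_2(m)+\delta_1(a)m$. The hypothesis on $\mathcal{A}$ asserts that every Jordan derivation of $\mathcal{A}$ is a derivation, so $\delta_1$ is in fact a derivation of $\mathcal{A}$. It then remains to check the requirements of Lemma \ref{xxsec3.1}: (i) $\delta_1$ is a derivation of $\mathcal{A}$, just obtained; (ii) $\mu_4=0$ is trivially a derivation of $\mathcal{K}$; (iii) the two identities on $\tau_2$, where the first $\tau_2(am)=a\tau_2(m)+\delta_1(a)m$ is exactly condition (2) of Lemma \ref{xxsec3.3}, and the second $\tau_2(mk)=\tau_2(m)k+m\mu_4(k)$ reduces to $\tau_2(mk)=\tau_2(m)k$, which holds because $\tau_2$ is $\mathcal{K}$-linear and $\mu_4=0$. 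Hence $\Theta$ satisfies Lemma \ref{xxsec3.1} and is a derivation.

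For the case $\mathcal{M}=0$ we have $\Lambda\cong\mathcal{A}\times\mathcal{K}$, with central idempotent $e=\left[\smallmatrix 1 & 0\\ 0 & 0\endsmallmatrix\right]$, $f=1-e$, and vanishing cross terms $e\Lambda f=f\Lambda e=0$. Here I would argue directly: expanding $\Theta(e)=\tfrac12\Theta(e\circ e)=e\Theta(e)+\Theta(e)e$ and multiplying by $e$ and $f$ on either side forces the diagonal blocks $e\Theta(e)e$ and $f\Theta(e)f$ to vanish, while the off-diagonal blocks lie in $e\Lambda f=f\Lambda e=0$; hence $\Theta(e)=0$. A companion computation with $a=eae$, namely $2\Theta(a)=e\Theta(a)+\Theta(a)e$, shows $\Theta(\mathcal{A})\subseteq\mathcal{A}$ and likewise $\Theta(\mathcal{K})\subseteq\mathcal{K}$, so $\Theta$ restricts to Jordan derivations of the two factors. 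By hypothesis its restriction to $\mathcal{A}$ is a derivation, and its restriction to the field $\mathcal{K}$ is a derivation; since the cross terms vanish, $\Theta$ is a derivation of $\Lambda$.

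The block bookkeeping and the verification in (iii) are routine. The one genuine point to get right --- the main obstacle --- is the $\mathcal{M}=0$ case, which is not covered by Lemma \ref{xxsec3.3} and must be handled separately through central idempotents. Once $\delta_1$ is upgraded from a Jordan derivation to a derivation via the hypothesis and the $\mathcal{M}=0$ case is disposed of, the result follows immediately from the comparison of Lemma \ref{xxsec3.1} with Lemma \ref{xxsec3.3}.
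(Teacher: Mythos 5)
Your proof is correct and follows essentially the same route as the paper, which deduces this corollary from the proof of Theorem \ref{xxsec3.4}, i.e.\ from the comparison of Lemma \ref{xxsec3.1} with Lemma \ref{xxsec3.3}. Your explicit central-idempotent treatment of the case $\mathcal{M}=0$ fills in a step the paper dismisses with ``the assertion holds true when $(1-\overline{e_i})\Lambda\overline{e_i}=0$,'' and your remark that ${\rm Char}\,\mathcal{K}\neq 2$ must be carried along (though omitted from the corollary's statement) is well taken.
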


At the end of this section, let us characterize Jordan generalized
derivations and generalized Jordan derivations of $\Lambda$. Recall
that a linear map $f:\Lambda\rightarrow \Lambda$ is called a
\emph{Jordan generalized derivation} if there exists a linear map
$d:\Lambda\rightarrow\Lambda$ such that
$$f(x\circ y)=f(x)\circ y+x\circ d(y)\eqno(3.1)$$
for all $x,y\in\Lambda$,where $d$ is called an associated linear map
of $f$. A linear map $f :\Lambda\longrightarrow \Lambda$ is called a
\emph{generalized Jordan derivation} if there exists a linear map $d
:\Lambda\longrightarrow \Lambda$ such that $f(x\circ
y)=f(x)y+f(y)x+xd(y)+yd(x)$ for all $x, y\in \Lambda$. A linear map
$f :\Lambda\longrightarrow \Lambda$ is called a \emph{generalized
derivation} if there exists a linear map $d :\Lambda\longrightarrow
\Lambda$ such that $f(xy)=f(x)y+xd(y)$ for all $x, y\in \Lambda$.

\begin{proposition}
Let ${\mathcal K}$ be a field with ${\rm Char} {\mathcal K} \neq 2$
and $\Lambda={\mathcal K}(\Gamma, \rho)$ a finite dimensional path
algebra of the quiver $(\Gamma, \rho)$ with relations.  Then
\begin{enumerate}
\item [(1)] Every Jordan generalized derivation of $\Lambda$ is a
generalized derivation.
\item [(2)] Every generalized Jordan derivation of $\Lambda$ is a
generalized derivation.
\end{enumerate}
\end{proposition}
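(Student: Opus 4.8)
The plan is to reduce both statements to the fact, already established in Theorem \ref{xxsec3.4}, that every Jordan derivation of $\Lambda$ is a derivation. Throughout I write $c=f(1)$ and let $L_c$ denote left multiplication by $c$.

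For part (2) I would first set $y=1$ in the (linearized) defining identity of a generalized Jordan derivation. Since $\operatorname{Char}\mathcal K\neq 2$ this yields $d(1)=0$ and $d(x)=f(x)-cx$, so the associated map is forced to be $f-L_c$, and the square form of the identity reads $f(x^2)=f(x)x+xd(x)$. Setting $\delta:=f-L_c=d$ I then compute directly
\[
\delta(x^2)=f(x^2)-cx^2=f(x)x+xd(x)-cx^2=(f(x)-cx)x+xd(x)=\delta(x)x+x\delta(x),
\]
so $\delta$ is a Jordan derivation. Theorem \ref{xxsec3.4} upgrades $\delta$ to an honest derivation, and then $f(xy)=cxy+\delta(xy)=(cx+\delta(x))y+x\delta(y)=f(x)y+x\delta(y)$ exhibits $f$ as a generalized derivation. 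No cancellation is delicate here, so part (2) is entirely routine.

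For part (1) I again locate the associated map: setting $x=1$ in $f(x\circ y)=f(x)\circ y+x\circ d(y)$ gives $d(1)=0$ and $d(y)=f(y)-\tfrac12(cy+yc)$. The decisive observation is that the candidate associated map for a generalized derivation, namely $\delta:=f-L_c$, differs from $d$ by an inner derivation:
\[
\delta(y)-d(y)=-cy+\tfrac12(cy+yc)=-\tfrac12[c,y],
\]
so $\delta=d-\tfrac12\theta$, where $\theta\colon y\mapsto[c,y]$ is the inner derivation determined by $c$. Since $\theta$ is automatically a derivation, $f=L_c+\delta$ will be a generalized derivation as soon as $\delta$, equivalently $d$, is a derivation; and by Theorem \ref{xxsec3.4} it is enough to show that $d$ is a \emph{Jordan} derivation. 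Thus everything comes down to the single identity $d(x\circ y)=d(x)\circ y+x\circ d(y)$.

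Feeding $f(y)=d(y)+\tfrac12(cy+yc)$ back into the defining identity, I would compute the Jordan defect of $d$ and obtain
\[
d(x\circ y)-d(x)\circ y-x\circ d(y)=-\tfrac12\big[[c,y],x\big];
\]
comparing this with its value after interchanging $x$ and $y$ (the left side being symmetric) already forces $c$ to commute with every commutator. The main obstacle is to promote this to the stronger statement that $d$ is a Jordan derivation, that is, that $[c,y]$ is central for every $y$. This is exactly the point at which the hypothesis that $\Gamma$ has no oriented cycles must be used, and where a naive argument fails: the centralizer of the commutator space is in general strictly larger than $\mathcal Z(\Lambda)$, as one already sees for $T_2(\mathcal K)$, so the existence of $d$ (not merely its symmetry) has to be exploited. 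Here I would use the path-algebra structure directly: the primitive idempotents $\overline e_i$ satisfy $\overline e_i\Lambda\overline e_i\cong\mathcal K$ and, since there are no oriented cycles, $\mathcal Z(\Lambda)\cap\operatorname{rad}\Lambda=0$. Evaluating the displayed defect on the $\overline e_i$ and on the arrows of $\Gamma$ — or, equivalently, running the one-point-extension induction of Theorem \ref{xxsec3.4} and peeling off a source at each stage — should force $[c,y]\in\mathcal Z(\Lambda)$ for all $y$, whence $c$ itself is central (a central element lying in $\operatorname{rad}\Lambda$ must vanish, so $c$ commutes with all $\overline e_i$ and then with all arrows). With $c$ central the defect above is zero, $d$ is a Jordan derivation, Theorem \ref{xxsec3.4} makes it a derivation, and part (1) follows as in the last step of part (2).
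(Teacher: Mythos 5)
Your part (2) is complete and correct, and is essentially the paper's own route: you reprove by hand the decomposition $f=L_{f(1)}+d$ with $d=f-L_{f(1)}$ a Jordan derivation (which the paper imports from \cite[Lemma 4.1]{Benkovic4}) and then apply Theorem \ref{xxsec3.4} to upgrade $d$ to a derivation.

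Part (1), however, has a genuine gap at exactly the step you flag as the main obstacle. You correctly reduce the problem to showing that $[c,y]$ is central for every $y$ (equivalently, that $c=f(1)$ is central), you correctly compute the defect identity $d(x\circ y)-d(x)\circ y-x\circ d(y)=-\tfrac12\bigl[[c,y],x\bigr]$, and you correctly observe that its symmetry only yields $[c,[x,y]]=0$, which is insufficient (your $T_2(\mathcal K)$ remark is apt). But the decisive implication --- that the \emph{existence} of a linear map $d$ satisfying this twisted Jordan identity forces $c\in\mathcal Z(\Lambda)$ --- is left at the level of ``evaluating \dots should force'' and is never actually proved. This is precisely the content the paper obtains from \cite[Lemma 2.4]{LiBenkovic} (which gives, beyond $[[x,y],f(1)]=0$, the block-diagonality of $f(1)$ with respect to the one-point-extension decomposition) followed by the source-peeling induction. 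Your sketch is closable, and more directly than the route you outline: put $x=y=\overline e_i$ in your defect identity. The left-hand side, $2d(\overline e_i)-2\bigl(d(\overline e_i)\overline e_i+\overline e_id(\overline e_i)\bigr)$, has zero components in $\overline e_i\Lambda(1-\overline e_i)$ and in $(1-\overline e_i)\Lambda\overline e_i$, while the right-hand side equals $-\tfrac12\bigl(\overline e_i c(1-\overline e_i)+(1-\overline e_i)c\overline e_i\bigr)$; hence $c=\sum_i\overline e_ic\overline e_i=\sum_i\lambda_i\overline e_i$ with $\lambda_i\in\mathcal K$, using $\overline e_i\Lambda\overline e_i\cong\mathcal K$ (no oriented cycles). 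Since every arrow satisfies $\overline\alpha=[\overline\alpha,\overline e_{s(\alpha)}]$, the relation $[c,[\Lambda,\Lambda]]=0$ you already derived gives $\lambda_{s(\alpha)}=\lambda_{e(\alpha)}$, so $c$ is central, the defect vanishes, and the rest of your argument goes through. As written, though, the proof of (1) is incomplete at its crucial step.
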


\begin{proof}
(1) Let $f$ be a Jordan generalized derivation on $\Lambda$.
Firstly, we claim that $f(1)\in \mathcal {Z}(\Lambda)$. In fact,
since $\Gamma$ has no oriented cycles, let us assume that $i$ is a
source in $\Gamma_0$. Then
$$\Lambda={\mathcal K}(\Gamma, \rho)\simeq \left[
\begin{array}
[c]{cc}%
\Lambda' & (1-{\overline e_i})\Lambda {\overline e_i}\\
0 & {\mathcal K}\\
\end{array}
\right].$$ It
follows from \cite{LiBenkovic} Lemma 2.4  that
$$f(1)=\left[\begin{array}[c]{cc}%
(1-{\overline e_i})f(1)(1-{\overline e_i}) & 0\\
0 & {\overline e_i}f(1){\overline e_i}\\ \end{array} \right]$$ and
$[[x,y], f(1)]=0$ for all $x, y\in \Lambda$. Note that $K$ is a
field and then clearly ${\overline e_i}f(1){\overline e_i}\in
\mathcal {Z}(\mathcal {K})=\mathcal {K}$. On the other hand, taking
$x=\left[\smallmatrix {\overline e_i} & 0\\
0 & 0 \endsmallmatrix \right]$ and $y=\left[\smallmatrix 0 & m\\
0 & 0 \endsmallmatrix \right]$, where $m\in (1-{\overline
e_i})\Lambda {\overline e_i}$, in $[[x,y], f(1)]=0$ leads to
$$(1-{\overline e_i})f(1)(1-{\overline e_i})m=m{\overline
e_i}f(1){\overline e_i}.$$ This implies that $f(1)\in \mathcal
{Z}(\Lambda)$ if and only if $(1-{\overline e_i})f(1)(1-{\overline
e_i})\in \mathcal {Z}(\Lambda')$. Repeat this process finite times,
we arrive at the algebra $\mathcal {K}$. Then the commutativity of
$\mathcal {K}$ gives $f(1)\in \mathcal {Z}(\Lambda)$. Hence we have
from \cite{LiBenkovic} Theorem 2.3 that for all $x\in \Lambda$,
$f(x)=f(1)x+d(x)$, where $d$ is a Jordan derivation of $\Lambda$. By
Theorem \ref{xxsec3.4}, $d$ is a derivation of $\Lambda$. It follows
from \cite{LiBenkovic} Proposition 2.1 that $f$ is a generalized
derivation of $\Lambda$.

(2) Let $f$ be a generalized Jordan derivation of $\Lambda$. Then we
have from \cite{Benkovic4} Lemma 4.1 that for all $x\in \Lambda$,
$f(x)=f(1)x+d(x)$, where $d$ is a Jordan derivation of $\Lambda$. By
Then $d$ is a derivation of $\Lambda$ Theorem \ref{xxsec3.4}. It
follows from \cite{LiBenkovic} Proposition 2.1 that $f$ is a
generalized derivation of $\Lambda$.
\end{proof}

\begin{remark}
Lemma 4.1 of \cite{Benkovic4} and Lemma 2.4 of \cite{LiBenkovic}
were obtained without the faithful assumption of bimodule $_\mathcal
{A}\mathcal {M}_\mathcal {B}$.
\end{remark}

\bigskip

\section{Lie derivations on path algebras}\label{xxsec4}

In \cite{Cheung}, Cheung characterized Lie derivations of triangular
algebras. The general form of Lie derivations was described and a
sufficient and necessary condition which enables every Lie
derivation to be standard was given. The current authors
\cite{LiWei1} studied the structure of Lie derivations of
generalized matrix algebras and also provided certain sufficient
condition such that every Lie derivation being of the standard form
$(\spadesuit)$. In this section we will investigate Lie derivations
on a class of path algebras without the faithful assumption.

\begin{lemma}\cite[Proposition 4]{Cheung}\label{xxsec4.1}
A Lie derivation $\Theta$ from $\mathcal{T_K}=\left[\smallmatrix {\mathcal A} & {\mathcal M}\\
0 & {\mathcal B} \endsmallmatrix \right]$ into itself is of the form
$$
\Theta\left(\left[
\begin{array}
[c]{cc}%
a & m\\
0 & b\\
\end{array}
\right]\right) =\left[
\begin{array}
[c]{cc}%
\delta_1(a)+\delta_4(b) & am_0-m_0b+\tau_2(m)\\
0 & \mu_1(a)+\mu_4(b)\\
\end{array}
\right] , \forall \left[
\begin{array}
[c]{cc}%
a & m\\
0 & b\\
\end{array}
\right]\in \mathcal{T_K},
$$
where $m_0\in {\mathcal M}$ and
$$
\begin{aligned} \delta_1:& {\mathcal A} \longrightarrow {\mathcal A}, & \delta_4:& {\mathcal B} \longrightarrow {\mathcal Z(A)}, &
 \tau_2: & {\mathcal M}\longrightarrow {\mathcal M}, &
\mu_1: & {\mathcal A}\longrightarrow {\mathcal Z(B)} & \mu_4: &
{\mathcal B}\longrightarrow {\mathcal B}
\end{aligned}
$$
are all $\mathcal{K}$-linear mappings satisfying the following
conditions
\begin{enumerate}
\item[(1)] $\delta_1$ is a Lie derivation of ${\mathcal A}$, $\mu_1([a, a'])=0$
for all $a, a'\in \mathcal{A}$ and
$\tau_2(am)=a\tau_{2}(m)+\delta_1(a)m-m\mu_1(a)$ for all $a\in
\mathcal{A}, m\in \mathcal{M};$

\item[(2)] $\mu_4$ is a Lie derivation of ${\mathcal B}$, $\delta_4([b,
b'])=0$ for all $b, b'\in \mathcal{B}$ and
$\tau_2(mb)=\tau_2(m)b+m\mu_4(b)-\delta_4(b)m$ for all $b\in
\mathcal{B}, m\in \mathcal{M}.$
\end{enumerate}

Furthermore, a Lie derivation $\Theta$ on $\mathcal{T_K}$ is of the
standard form $(\spadesuit)$ if and only if there exist linear
mappings $l_{\mathcal A}: {\mathcal A}\rightarrow {\mathcal Z(A)}$
and $l_{\mathcal B}: {\mathcal B}\rightarrow {\mathcal Z(B)}$
satisfying
\begin{enumerate}
\item [(3)] $\delta=\delta_1-l_{\mathcal A}$ is a derivation on $\mathcal{A}$, $l_{\mathcal A}([a,
a'])=0$ for all $a, a'\in \mathcal{A}$ and $l_{\mathcal
A}(a)m=m\mu_1(a)$ for all $a\in \mathcal{A}, m\in \mathcal{M};$

\item[(4)] $\mu=\mu_1-l_{\mathcal B}$ is a derivation on $\mathcal{B}$, $l_{\mathcal B}([b,
b'])=0$ for all $b, b'\in \mathcal{B}$ and $ml_{\mathcal
B}(b)=\delta_4(b)m$ for all $b\in \mathcal{B}, m\in \mathcal{M}.$
\end{enumerate}
\end{lemma}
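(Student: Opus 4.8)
The plan is to run the Peirce-component analysis adapted to the triangular setting, an approach that never uses faithfulness of $\mathcal M$. Write $p=\left[\smallmatrix 1 & 0\\ 0 & 0\endsmallmatrix\right]$ and $q=1-p$, and decompose each element of $\mathcal{T_K}$ into its three Peirce components, abbreviating $\hat a=\left[\smallmatrix a & 0\\ 0 & 0\endsmallmatrix\right]$, $\hat m=\left[\smallmatrix 0 & m\\ 0 & 0\endsmallmatrix\right]$, $\hat b=\left[\smallmatrix 0 & 0\\ 0 & b\endsmallmatrix\right]$. Because $q\mathcal{T_K}p=0$ and $\hat m\hat m'=0$, the only nontrivial commutator relations among components are $[\hat a,\hat a']=\widehat{[a,a']}$, $[\hat b,\hat b']=\widehat{[b,b']}$, $[\hat a,\hat m]=\widehat{am}$, $[\hat m,\hat b]=\widehat{mb}$, $[\hat a,\hat b]=0$ and $[\hat m,\hat m']=0$, together with $[p,\hat m]=\hat m$. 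By $\mathcal K$-linearity $\Theta$ is determined by its values on the three component types, so I would write those values in block form and extract all constraints by feeding each of the above relations into the Lie identity $\Theta([x,y])=[\Theta(x),y]+[x,\Theta(y)]$.

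The first and decisive step is to apply the Lie identity to $\hat m=[p,\hat m]$: expanding $[p,\Theta(\hat m)]$ and comparing Peirce blocks forces both diagonal blocks of $\Theta(\hat m)$ to vanish, giving $\Theta(\hat m)=\widehat{\tau_2(m)}$ for a single linear map $\tau_2\colon\mathcal M\to\mathcal M$. Applying the identity to $[\hat a,\hat b]=0$ then yields $\delta_4(b)\in\mathcal{Z(A)}$ and $\mu_1(a)\in\mathcal{Z(B)}$ from the diagonal blocks and the relation $f_1(a)b+af_4(b)=0$ from the off-diagonal block, where $f_1,f_4$ denote the upper-right blocks of $\Theta(\hat a),\Theta(\hat b)$; specializing $a=1$ and $b=1$ shows $f_1(a)=am_0$ and $f_4(b)=-m_0b$ with $m_0$ the upper-right block of $\Theta(p)$. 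Feeding in $[\hat a,\hat a']=\widehat{[a,a']}$ and $[\hat b,\hat b']=\widehat{[b,b']}$ shows that $\delta_1,\mu_4$ are Lie derivations of $\mathcal A,\mathcal B$ and that $\mu_1([a,a'])=\delta_4([b,b'])=0$, while $[\hat a,\hat m]=\widehat{am}$ and $[\hat m,\hat b]=\widehat{mb}$ produce exactly the two module relations of conditions (1) and (2). Reassembling by linearity gives the asserted block form; the converse is the routine check that any $\Theta$ of this shape satisfies the Lie identity on all pairs of components.

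For the ``if'' half of the standardness criterion, given $l_{\mathcal A},l_{\mathcal B}$ as in (3),(4) I would set $\Phi\left(\left[\smallmatrix a & m\\ 0 & b\endsmallmatrix\right]\right)=\left[\smallmatrix l_{\mathcal A}(a)+\delta_4(b) & 0\\ 0 & \mu_1(a)+l_{\mathcal B}(b)\endsmallmatrix\right]$ and $D=\Theta-\Phi$. The identities $l_{\mathcal A}(a)m=m\mu_1(a)$ and $\delta_4(b)m=ml_{\mathcal B}(b)$ are precisely what places $\Phi$ in $\mathcal{Z(T_K)}$, and the same two identities turn the condition-(1),(2) relations into the derivation relations $\tau_2(am)=a\tau_2(m)+\delta(a)m$ and $\tau_2(mb)=\tau_2(m)b+m\mu(b)$ for $\delta=\delta_1-l_{\mathcal A}$ and $\mu=\mu_4-l_{\mathcal B}$; since (3),(4) make $\delta,\mu$ derivations, Lemma \ref{xxsec3.1} certifies that $D$ is a derivation. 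For the ``only if'' half I would start from a given decomposition $\Theta=D+\Phi$, match Peirce blocks against the form already established, use that central elements are diagonal and that $\Theta(\hat m)$ has zero diagonal to force $\Phi(\hat m)=0$, and define $l_{\mathcal A}=\delta_1-\delta$, $l_{\mathcal B}=\mu_4-\mu$ from the diagonal corrections; centrality of $\Phi$ supplies the intertwining identities, and $l_{\mathcal A}([a,a'])=l_{\mathcal B}([b,b'])=0$ follows because each $l$ is central-valued and equals a difference of a Lie derivation and a derivation, so the bracket terms in its Leibniz expansion vanish.

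I expect the only genuinely delicate point to be this first reduction via $[p,\hat m]=\hat m$: it is what simultaneously annihilates both diagonal blocks of $\Theta(\hat m)$ and thereby lets the whole argument proceed without ever assuming $\mathcal M$ to be faithful, so getting that single comparison exactly right is the crux. Everything afterward is bookkeeping with $2\times 2$ blocks.
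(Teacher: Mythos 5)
The paper does not actually prove this lemma --- it is quoted from Cheung \cite[Proposition 4]{Cheung} --- and your Peirce-decomposition argument (reducing via $[p,\hat m]=\hat m$ to kill the diagonal blocks of $\Theta(\hat m)$, then feeding the remaining component commutators into the Lie identity, and handling the standardness criterion by constructing $\Phi$ from $l_{\mathcal A},l_{\mathcal B}$ and invoking Lemma~\ref{xxsec3.1}) is precisely the route taken in that source, so your reconstruction is correct and essentially the same proof. The only point worth flagging is that condition (4) as printed contains a typo ($\mu=\mu_1-l_{\mathcal B}$ should read $\mu=\mu_4-l_{\mathcal B}$), which you have silently and correctly repaired.
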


Let $\mathcal {R}$ be a commutative ring with identity and
$\mathcal{A}$ be an $\mathcal{R}$-algebra. Then $\mathcal{W(A)}$
defined in \cite{Cheung} is the smallest subalgebra of $\mathcal{A}$
satisfying the following conditions:

\begin{enumerate}
\item[(1)] $[x,y]\in \mathcal{W(A)}$ for all $x, y\in \mathcal{A};$

\item[(2)] Suppose that $x\in \mathcal{A}$ and $f(t)$ is a polynomial
in $\mathcal{R}[t]$. If $f'(x)=0$, then $f(x)\in \mathcal{W(A)};$

\item[(3)] Suppose that $x\in \mathcal{A}$ and $f(t)$ is a polynomial
in $\mathcal{R}[t]$. If $f(x)\in \mathcal{W(A)}$ and $f'(x)$ is
invertible, then $x\in \mathcal{W(A)};$

\item[(4)] $\mathcal{W(A)}$ contains all the idempotents in $\mathcal{A};$

\item[(5)] $\mathcal{W(A)}$ contains all the elements of the form $x^p$,
where $x\in \mathcal{A}$ and $p\geq 0$ is the characteristic of
$\mathcal{A};$

\item[(6)] $\{x\in \mathcal{W(A)}\mid x^{-1}\in \mathcal{W(A)}\}\subseteq
\mathcal{W(A)};$

\item[(7)] If $x\in \mathcal{A}$ is invertible with $x^n\in \mathcal{W(A)}$
for some positive integer $n$, then $nx\in \mathcal{W(A)}.$
\end{enumerate}

Then Cheung in \cite{Cheung} gives a sufficient condition for a Lie
derivation of $\mathcal{T_K}$ being of the standard form.

\begin{lemma}\cite[Theorem 11]{Cheung}\label{xxsec4.3}
Let $\mathcal{T_K}=\left[\smallmatrix {\mathcal A} & {\mathcal M}\\
0 & {\mathcal B}\endsmallmatrix \right]$ be a triangular algebra.
Then every Lie derivation of $\mathcal{T_K}$ is of the standard form
$(\spadesuit)$ if the following conditions are satisfied:
\begin{enumerate}
\item[(1)] Every Lie derivation of $\mathcal{A}$ is of standard form and ${\mathcal A}={\mathcal W}\mathcal{(A)};$
\item[(2)] Every Lie derivation of ${\mathcal B}$ is of standard form and
${\mathcal B}={\mathcal W}\mathcal{(B)}.$
\end{enumerate}
\end{lemma}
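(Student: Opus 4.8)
The plan is to verify the criterion in the \emph{furthermore} part of Lemma \ref{xxsec4.1}: write $\Theta$ in the general form supplied there, and then produce central-valued linear maps $l_{\mathcal A}\colon\mathcal{A}\to\mathcal{Z(A)}$ and $l_{\mathcal B}\colon\mathcal{B}\to\mathcal{Z(B)}$ fulfilling conditions (3) and (4). Since every Lie derivation of $\mathcal{A}$ is standard by hypothesis, the Lie derivation $\delta_1$ of $\mathcal{A}$ splits as $\delta_1=\delta+l_{\mathcal A}$ with $\delta$ a genuine derivation and $l_{\mathcal A}$ linear into $\mathcal{Z(A)}$; being central-valued, $l_{\mathcal A}$ automatically annihilates commutators. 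This already secures the first two clauses of condition (3), and the symmetric splitting $\mu_4=\mu+l_{\mathcal B}$ secures the first two clauses of (4). Everything then reduces to the two module-compatibility identities $l_{\mathcal A}(a)m=m\mu_1(a)$ and $ml_{\mathcal B}(b)=\delta_4(b)m$.

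The heart of the argument, and the place where the hypotheses $\mathcal{A}=\mathcal{W(A)}$ and $\mathcal{B}=\mathcal{W(B)}$ finally enter, is the verification of these identities. Fix $m$ and set $F_m(a)=l_{\mathcal A}(a)m-m\mu_1(a)$; the goal is $F_m\equiv 0$. I would first extract a multiplicative law for $F_m$ by evaluating the identity $\tau_2(am)=a\tau_2(m)+\delta_1(a)m-m\mu_1(a)$ of Lemma \ref{xxsec4.1}(1) on the two factorizations of $a'am$ and subtracting. After the $\tau_2$-terms cancel and one uses that $\delta=\delta_1-l_{\mathcal A}$ is a derivation together with the centrality of $l_{\mathcal A}(a)$ and $\mu_1(a)$, this collapses to the cocycle relation $F_m(a'a)=a'F_m(a)+F_{am}(a')$. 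Specializing and inducting on powers (again exploiting centrality to rewrite $F_{a^{k}m}(a)=a^{k}F_m(a)$) yields the clean ``derivative'' formula $F_m(a^n)=na^{n-1}F_m(a)$, hence by linearity $F_m(f(a))=f'(a)\,F_m(a)$ for every polynomial $f$.

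With this formula in hand the closure conditions defining $\mathcal{W(A)}$ become almost mechanical to check for the set $S=\{a\in\mathcal{A}:F_m(a)=0\ \text{for all}\ m\}$. That $S$ is a subalgebra and contains all brackets follows from the cocycle relation and the fact that $l_{\mathcal A},\mu_1$ kill commutators; condition (2) is immediate since $f'(x)=0$ forces $F_m(f(x))=0$; condition (3) follows because $f'(x)$ invertible together with $f'(x)F_m(x)=0$ gives $F_m(x)=0$; the idempotents of (4) are handled by applying the power formula to $e^2=e$ and inverting $1-2e$; condition (5) follows from $f'(x)=px^{p-1}=0$ in characteristic $p$; and (6), (7) follow likewise by inverting the appropriate factor. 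Thus $S$ is a subalgebra closed under all seven operations, so $\mathcal{W(A)}\subseteq S$, and $\mathcal{A}=\mathcal{W(A)}$ forces $S=\mathcal{A}$, establishing $l_{\mathcal A}(a)m=m\mu_1(a)$ everywhere. The identity $ml_{\mathcal B}(b)=\delta_4(b)m$ is obtained in exactly the same manner from Lemma \ref{xxsec4.1}(2) using $\mathcal{B}=\mathcal{W(B)}$. Conditions (3) and (4) are then complete, and Lemma \ref{xxsec4.1} yields that $\Theta$ is of the standard form.

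I expect the main obstacle to be the derivation of the cocycle/derivative relation: organizing the two-factor computation so that the derivation part $\delta$ and the module terms cancel cleanly, and keeping careful track of which products are taken inside $\mathcal{A}$, inside $\mathcal{B}$, or as module actions on $\mathcal{M}$. Note that the derivation part drops out of $F_m$ entirely, so any admissible splitting $\delta_1=\delta+l_{\mathcal A}$ works and no delicate choice is needed. Once $F_m(f(a))=f'(a)F_m(a)$ is in place, the interplay with $\mathcal{W(A)}$ is precisely the abstract mechanism those seven axioms were designed to trigger, and no further ingenuity is required.
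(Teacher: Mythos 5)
The paper itself gives no proof of this lemma: it is imported verbatim as Theorem 11 of Cheung's paper on Lie derivations of triangular algebras, and your argument is in essence Cheung's original one — reduce to conditions (3) and (4) of Lemma \ref{xxsec4.1}, derive the cocycle relation $F_m(a'a)=a'F_m(a)+F_{am}(a')$ and the polynomial identity $F_m(f(a))=f'(a)F_m(a)$, and show that $S=\{a\in\mathcal{A}: l_{\mathcal A}(a)m=m\mu_1(a)\ \forall m\}$ is a subalgebra stable under the seven closure operations defining $\mathcal{W(A)}$, so that $\mathcal{A}=\mathcal{W(A)}\subseteq S$. I checked the computations and they are sound. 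One small gloss worth fixing: a central-valued linear map does not ``automatically'' annihilate commutators (e.g.\ $a\mapsto a_{11}1$ on $M_2(\mathcal{K})$ is central-valued but sends $[e_{12},e_{21}]$ to $1$); here $l_{\mathcal A}([a,a'])=0$ because $l_{\mathcal A}=\delta_1-\delta$ with $\delta_1$ a Lie derivation and $\delta$ a derivation, so $l_{\mathcal A}([a,a'])=[l_{\mathcal A}(a),a']+[a,l_{\mathcal A}(a')]=0$ by centrality of the values.
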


Since a field $\mathcal{K}$ is an algebra over itself, obviously
${\mathcal K}=\mathcal{W(K)}$. Surprisingly, for a finite
dimensional path algebra $\Lambda=\mathcal {K}(\Gamma, \rho)$, the
equality $\mathcal {W}(\Lambda)=\Lambda$ also holds.

\begin{lemma}\label{xxsec4.2}
Let $\mathcal{K}$ be a field and $\Gamma$ a finite quiver without
oriented cycles. Let $\Lambda={\mathcal K}(\Gamma, \rho)$ be the
path algebra of $\Gamma$ with relations. Then ${\mathcal
W}(\Lambda)=\Lambda$.
\end{lemma}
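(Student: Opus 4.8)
The plan is to exhibit a $\mathcal{K}$-basis of $\Lambda$ all of whose members lie in $\mathcal{W}(\Lambda)$; since $\mathcal{W}(\Lambda)$ is by definition a subalgebra, in particular a $\mathcal{K}$-subspace, of $\Lambda$, this forces $\mathcal{W}(\Lambda)=\Lambda$. Recall that $\Lambda={\mathcal K}(\Gamma,\rho)$ has as a basis the images $\overline p$ of the paths $p$ not lying in $\langle\rho\rangle$, and each such basis vector is either a trivial path $\overline{e_i}$ (with $i\in\Gamma_0$) or a nontrivial path $\overline p$ with $s(p)\neq e(p)$. The latter inequality is precisely where the hypothesis that $\Gamma$ has no oriented cycles enters, and it will be the engine of the argument.

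The two defining clauses of $\mathcal{W}(\Lambda)$ that I would invoke are clause (4), that $\mathcal{W}(\Lambda)$ contains every idempotent of $\Lambda$, and clause (1), that $[x,y]\in\mathcal{W}(\Lambda)$ for all $x,y\in\Lambda$. First, each trivial path $\overline{e_i}$ is a primitive idempotent of $\Lambda$, so $\overline{e_i}\in\mathcal{W}(\Lambda)$ immediately by (4). It then remains only to capture the nontrivial basis paths.

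Second, and this is the crux, I would recover each nontrivial path as a single commutator with the idempotent at its terminal vertex. Fix a nontrivial path $p$ with $i=s(p)$ and $j=e(p)$; since $\Gamma$ has no oriented cycles, $i\neq j$. Applying the multiplication rule of the path algebra one checks that $\overline{e_j}\,\overline p=\overline p$, because $e(p)=j=s(e_j)$, while $\overline p\,\overline{e_j}=0$, because $e(e_j)=j\neq i=s(p)$. Hence
$$
[\overline{e_j},\overline p]=\overline{e_j}\,\overline p-\overline p\,\overline{e_j}=\overline p,
$$
and by clause (1) the left-hand side lies in $\mathcal{W}(\Lambda)$, so $\overline p\in\mathcal{W}(\Lambda)$.

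Combining the two cases, every basis vector of $\Lambda$ lies in $\mathcal{W}(\Lambda)$, whence $\mathcal{W}(\Lambda)=\Lambda$. The only genuine subtlety is the observation driving the last step: the acyclicity of $\Gamma$ guarantees $s(p)\neq e(p)$ for every nontrivial path, which is exactly what makes the ``wrong-side'' product $\overline p\,\overline{e_j}$ vanish and turns the commutator with the terminal idempotent into the identity on $\overline p$. I expect no appeal to the more delicate clauses (2), (3) or (5)--(7) of the definition of $\mathcal{W}$ to be necessary.
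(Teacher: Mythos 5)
Your proposal is correct and follows essentially the same route as the paper: trivial paths lie in $\mathcal{W}(\Lambda)$ by clause (4) since they are idempotents, and each nontrivial path $\overline p$ is recovered as a commutator with a trivial path using acyclicity (the paper writes $\overline p=[\overline p,\overline{e_{s(p)}}]$ via the source idempotent, you use $[\overline{e_{e(p)}},\overline p]$ via the terminal one --- an immaterial difference). The only slight imprecision is your description of the basis of ${\mathcal K}(\Gamma,\rho)$ (images of paths outside $\langle\rho\rangle$ need not be linearly independent when relations are nontrivial linear combinations), but since the images of all paths span $\Lambda$, your argument goes through unchanged.
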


\begin{proof}
If $\Gamma$ only contains one vertex, then $\Lambda\simeq {\mathcal
K}$. In this case we get ${\mathcal W}(\Lambda)=\Lambda$.

Now suppose that the number of vertices in $\Gamma$ is not less than
2. It follows from the condition (4) in the definition of ${\mathcal
W}(\Lambda)$ that all the trivial paths are contained in ${\mathcal
W}(\Lambda)$. On the other hand, for an arbitrary nontrivial path
$\overline p=\overline\alpha_n\cdots\overline\alpha_1$, we have
$\overline p=[\overline p, s(\overline p)]$, which is due to the
fact $\overline ps(\overline p)=\overline p$ and $s(\overline
p)\overline p=0$. Then the condition (1) of the definition of
${\mathcal W}(\Lambda)$ implies that $\overline p\in {\mathcal
W}(\Lambda)$. Therefore all paths in $(\Gamma, \rho)$ are contained
in ${\mathcal W}(\Lambda)$. Hence $\Lambda={\mathcal W}(\Lambda)$.
\end{proof}

Now we are ready to give the main result of this section.

\begin{theorem}\label{xxsec4.4}
Let $\Lambda={\mathcal K}(\Gamma, \rho)$ a finite dimensional path
algebra of the quiver $(\Gamma, \rho)$ with relations and $\Theta$
be a Lie derivation from $\Lambda$ into itself. Then $\Theta$ is of
the standard form $(\spadesuit)$. Moreover, the standard
decomposition is unique.
\end{theorem}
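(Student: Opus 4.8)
The plan is to establish both the existence and the uniqueness of the standard decomposition by reducing to Cheung's sufficient condition recorded in Lemma~\ref{xxsec4.3}, running the same inductive ``peel off a source'' argument that drove the proof of Theorem~\ref{xxsec3.4}. First I would induct on the cardinality of $\Gamma_0$. The base case is the single-vertex quiver, where $\Lambda \simeq \mathcal{K}$: since $\mathcal{K}$ is commutative all brackets vanish, so every Lie derivation of $\mathcal{K}$ is trivially of the standard form $(\spadesuit)$ with $D = 0$ and $\Phi = \Theta$, using $\mathcal{Z(K)} = \mathcal{K}$.

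For the inductive step, assume $|\Gamma_0| \geq 2$ and that the theorem holds for all path algebras of finite acyclic quivers with fewer vertices. Choosing a source $i \in \Gamma_0$ and transporting $\Theta$ through the one-point extension isomorphism, I would write
$$\Lambda \simeq \left[\begin{array}{cc} \Lambda' & (1-\overline{e_i})\Lambda\overline{e_i} \\ 0 & \mathcal{K} \end{array}\right],$$
where $\Lambda' = (1-\overline{e_i})\Lambda(1-\overline{e_i}) \simeq \mathcal{K}(\Gamma', \rho')$ and $(\Gamma', \rho')$ is obtained by deleting $i$. The quiver $\Gamma'$ is again finite and acyclic with one fewer vertex. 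I then check the two hypotheses of Lemma~\ref{xxsec4.3}: for the corner $\mathcal{B} = \mathcal{K}$, every Lie derivation is standard (the base case) and $\mathcal{K} = \mathcal{W(K)}$; for the corner $\mathcal{A} = \Lambda'$, every Lie derivation is standard by the induction hypothesis, while the equality $\Lambda' = \mathcal{W}(\Lambda')$ is exactly the content of Lemma~\ref{xxsec4.2}. Lemma~\ref{xxsec4.3} then delivers that every Lie derivation of $\Lambda$ is of the standard form $(\spadesuit)$, closing the induction.

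For uniqueness, suppose $\Theta = D_1 + \Phi_1 = D_2 + \Phi_2$ are two standard decompositions, each $D_j$ a derivation and each $\Phi_j$ mapping $\Lambda$ into $\mathcal{Z}(\Lambda)$. Then $D_1 - D_2 = \Phi_2 - \Phi_1$ is simultaneously a derivation of $\Lambda$ and a map whose image lies in $\mathcal{Z}(\Lambda)$. Corollary~\ref{xxsec3.5} forces $D_1 - D_2 = 0$, whence $D_1 = D_2$ and $\Phi_1 = \Phi_2$, so the decomposition is unique.

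The genuinely substantive ingredient, and the step I expect to carry the real weight, is confirming that the hypotheses of Cheung's criterion in Lemma~\ref{xxsec4.3} persist at every stage of the induction \emph{without} any faithfulness assumption on the bimodule $(1-\overline{e_i})\Lambda\overline{e_i}$. In particular the condition $\Lambda' = \mathcal{W}(\Lambda')$ is not automatic, and it is precisely this that Lemma~\ref{xxsec4.2} supplies; the remaining verifications (that acyclicity is preserved under vertex deletion and that the new corner is again $\mathcal{K}$) are routine. Everything else is bookkeeping with the triangular structure inherited from the one-point extension.
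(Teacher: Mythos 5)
Your proposal is correct and follows essentially the same route as the paper: peel off a source vertex, verify the hypotheses of Cheung's criterion (Lemma~\ref{xxsec4.3}) using Lemma~\ref{xxsec4.2} for the corner $\Lambda'$, and deduce uniqueness from Corollary~\ref{xxsec3.5}. Your explicit induction on $|\Gamma_0|$ is in fact a slightly cleaner packaging than the paper's ``repeat the process'' phrasing, since it uses only the sufficient direction of Lemma~\ref{xxsec4.3} (which is all that lemma actually provides), but the substance is identical.
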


\begin{proof}
If $\Gamma$ only contains one vertex, then $\Lambda\simeq {\mathcal
K}$. Clearly, every Lie derivation on $\mathcal{K}$ is of the
standard form $(\spadesuit)$.

Now assume that the number of vertices in $\Gamma$ is not less than
2. Take a source $i$ in $\Gamma$ and let $\overline e_i$ be the
corresponding idempotent in $\Lambda$. Then
$$
\Lambda\simeq \left[
\begin{array}
[c]{cc}%
(1-{\overline e_i})\Lambda(1-{\overline e_i}) & (1-{\overline e_i})\Lambda {\overline e_i}\\
0 & {\mathcal K}\\
\end{array}
\right].$$

Note that each Lie derivation on ${\mathcal K}$ is of the standard
form $(\spadesuit)$ and ${\mathcal W}({\mathcal K})={\mathcal K}$.
Let us denote $(1-{\overline e_i})\Lambda(1-{\overline e_i})$ by
$\Lambda'$. In view of Lemma \ref{xxsec4.2} we have
$\Lambda'={\mathcal W}(\Lambda')$. By Lemma \ref{xxsec4.3}, we
obtain that each derivation on $\Lambda$ is of the standard form if
and only if each Lie derivation on $\Lambda'$ is of the standard
from. Repeating this process continuously, we ultimately conclude
that each Lie derivation on $\Lambda$ is of the standard form if and
only if each Lie derivation on ${\mathcal K}$ is of the standard
form. This is due to the fact that $\Gamma$ is a finite quiver.
Therefore every Lie derivation on $\Lambda$ is of the standard form
$(\spadesuit)$.

Let us see the uniqueness of the standard decomposition. Suppose
that $\Theta=D+\Phi=D'+\Phi'$. Then $D-D'=\Phi'-\Phi$. Clearly, the
image of $\Phi'-\Phi$ is in $\mathcal {Z}(\Lambda)$. This shows that
$(D-D')(x)\in \mathcal {Z}(\Lambda)$ for all $x\in \Lambda$. It
follows from Corollary \ref{xxsec3.5} that $D-D'=0$. Consequently,
$D=D'$ and $\Phi=\Phi'$.
\end{proof}

\begin{corollary}\label{xxsec4.5}
Let $\Theta_{\rm Lied}$ be a Lie derivation of $\Lambda={\mathcal
K}(\Gamma, \rho)$. Then there exists a derivation $D$ of $\Lambda$
such that $\Theta_{\rm Lied}(x)=D(x)$ for all $x=\sum_ik_i\overline
p_i\in \Lambda$, where $p_i$ are non-trivial paths in $\Gamma$.
\end{corollary}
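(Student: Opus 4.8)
The plan is to invoke the standard decomposition furnished by Theorem \ref{xxsec4.4} and then exploit the fact, already observed in the proof of Lemma \ref{xxsec4.2}, that every nontrivial path is a commutator. By Theorem \ref{xxsec4.4} I would first write $\Theta_{\rm Lied} = D + \Phi$, where $D$ is an ordinary derivation of $\Lambda$ and $\Phi$ is a $\mathcal{K}$-linear map from $\Lambda$ into the center $\mathcal{Z}(\Lambda)$. Since the target element has the form $x = \sum_i k_i \overline{p_i}$ with each $p_i$ a nontrivial path, it suffices to show $\Phi(\overline{p}) = 0$ for every nontrivial path $\overline{p}$; the conclusion then follows by linearity, because $\Theta_{\rm Lied}(x) = D(x) + \Phi(x) = D(x)$.

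The first key step is to observe that $\Phi = \Theta_{\rm Lied} - D$ is itself a Lie derivation, being the difference of the Lie derivation $\Theta_{\rm Lied}$ and the (ordinary, hence Lie) derivation $D$. The crucial point is that any Lie derivation whose image lies in the center must annihilate every commutator: for all $a, b \in \Lambda$ one has $\Phi([a,b]) = [\Phi(a), b] + [a, \Phi(b)]$, and both bracket terms vanish because $\Phi(a)$ and $\Phi(b)$ are central. Hence $\Phi([a,b]) = 0$ for all $a, b \in \Lambda$.

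The second key step is to recall the identity used in the proof of Lemma \ref{xxsec4.2}: for a nontrivial path $\overline{p}$ one has $\overline{p}\, s(\overline{p}) = \overline{p}$ and $s(\overline{p})\, \overline{p} = 0$, the latter because $\Gamma$ has no oriented cycles, so $e(\overline{p}) \neq s(\overline{p})$. Consequently $\overline{p} = [\overline{p}, s(\overline{p})]$ is a commutator. Combining this with the first step gives $\Phi(\overline{p}) = 0$ for every nontrivial path, and by $\mathcal{K}$-linearity $\Phi$ vanishes on every combination $x = \sum_i k_i \overline{p_i}$ of nontrivial paths, whence $\Theta_{\rm Lied}(x) = D(x)$.

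I expect no serious obstacle here once the standard decomposition is available: the statement is essentially a direct corollary of Theorem \ref{xxsec4.4} together with the commutator representation of nontrivial paths borrowed from Lemma \ref{xxsec4.2}. The only point requiring mild care is bookkeeping the multiplication convention for paths, namely on which side the source idempotent must act so that $s(\overline{p})\,\overline{p} = 0$; but this is precisely the computation already carried out in Lemma \ref{xxsec4.2}, so no new difficulty arises.
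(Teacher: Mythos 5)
Your proof is correct, but it takes a noticeably more direct route than the paper. The paper first embeds $\Lambda$ into the larger path algebra $\Lambda'\simeq\left[\begin{smallmatrix}\Lambda & 0\\ 0 & \mathcal{K}\end{smallmatrix}\right]$ obtained by adjoining an isolated vertex, extends $\Theta_{\rm Lied}$ to a Lie derivation of $\Lambda'$, applies Theorem \ref{xxsec4.4} there, and then reads off from Cheung's structure result (Lemma \ref{xxsec4.1}, condition (3)) a central-valued map $\phi_\Lambda$ with $\phi_\Lambda([x,y])=0$; only then does it use $\overline p=[\overline p, \overline e_{s(p)}]$. You instead apply Theorem \ref{xxsec4.4} directly to $\Lambda$ to get $\Theta_{\rm Lied}=D+\Phi$, and observe that $\Phi$, being the difference of two Lie derivations, is itself a Lie derivation with central image and therefore annihilates every commutator --- after which the same identity $\overline p=[\overline p,\overline e_{s(p)}]$ finishes the argument. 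Your observation is a standard and perfectly valid shortcut; it buys you a proof that avoids the one-point extension construction and any appeal to Lemma \ref{xxsec4.1}, whereas the paper's detour is essentially there to extract the property ``$\Phi$ kills commutators'' from Cheung's explicit description rather than from the definition of a Lie derivation. Both arguments rest on the same two pillars (the standard decomposition of Theorem \ref{xxsec4.4} and the commutator representation of nontrivial paths), and your handling of the multiplication convention, i.e.\ $\overline e_{s(p)}\,\overline p=0$ because $\Gamma$ has no oriented cycles, matches the computation in Lemma \ref{xxsec4.2}.
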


\begin{proof} Let us denote by $\Gamma'$ the
quiver obtained via adding a vertex to $\Gamma$ with no arrows
starting and ending at it. Then
$$\Lambda'=K(\Gamma', \rho)\simeq \left[
\begin{array}
[c]{cc}%
\Lambda & 0\\
0 & {\mathcal K}\\
\end{array}
\right].
$$
Let $\Theta_{\rm Lied}$ be a Lie derivation on $\Lambda$ and let $f$
be a Lie derivation on $\mathcal{K}$. It follows from Lemma
\ref{xxsec4.1} that
$$
\begin{aligned}
& \Theta'_{\rm Lied}\left(\left[
\begin{array}
[c]{cc}%
x & 0\\
0 & b\\
\end{array}
\right]\right) =& \left[
\begin{array}
[c]{cc}%
\Theta_{\rm Lied}(x) & 0\\
0 & f(b)\\
\end{array}
\right] ,  & \quad \forall \left[
\begin{array}
[c]{cc}%
x & 0\\
0 & b\\
\end{array}
\right]\in \Lambda'
\end{aligned}
$$
is a Lie derivation on $\Lambda'$. We have from Theorem
\ref{xxsec4.4} that $\Theta'_{\rm Lied}$ is of standard form. Then
by Lemma \ref{xxsec4.1}, there exists linear map $\phi_{\rm
\Lambda}:\Lambda\rightarrow Z(\Lambda)$ such that $\Theta_{\rm
Lied}-\phi_{\rm \Lambda}$ is a derivation and $\phi_{\rm\Lambda}([x,
y])=0$ for all $x, y\in \Lambda$. For a nontrivial path $p\in
\Gamma$, it is easy to check that $\overline p=[\overline p, s(p)]$.
Thus $\phi_{\rm \Lambda}(\overline p)=0$. This implies that for
arbitrary $x=\sum_ik_i\overline p_i\in \Lambda$, where $p_i$ is a
non-trivial path in $\Gamma$, $\phi_{\rm \Lambda}(x)=0$. Define $D$
to be $\Theta_{\rm Lied}-\phi_{\rm \Lambda}$. Then $D(x)=\Theta_{\rm
Lied}(x)-\phi_{\rm \Lambda}(x)=\Theta_{\rm Lied}(x)$.
\end{proof}

\begin{lemma}\cite[Theorem 2.5]{GuoLi}\label{xxsec4.6}
A linear mapping $\Theta$ from a path algebra ${\mathcal K}\Gamma$
into itself is a derivation if and only if $\Theta$ satisfies
\begin{enumerate}
\item[(1)] $\Theta(e_i)=\sum\limits_{q\in \mathscr{P}, s(q)\neq e(q)\atop s(q)=i\,\, or \,\, e(q)=i}k_q^{e_i}q$;
\item[(2)] For each nontrivial path $p$,
$$\Theta(p)=\sum_{q\in \mathscr{P}, s(q)\neq e(q)\atop e(q)=s(p)}k_q^{s(p)}qp+\sum_{q\in \mathscr{P}, s(q)=s(p)\atop e(q)=e(p)}k_q^pq
+\sum_{q\in \mathscr{P}, s(q)\neq e(q)\atop
s(q)=e(p)}k_q^{e(p)}pq.$$
\end{enumerate}
The coefficients $c_q^p$ are subject to certain
conditions.\footnote{See \cite{GuoLi} Theorem 2.5 for details.}
\end{lemma}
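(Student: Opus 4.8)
The plan is to exploit the fact that $\mathcal{K}\Gamma$ is generated as a $\mathcal{K}$-algebra by the trivial paths $e_i$ ($i\in\Gamma_0$) together with the arrows, and that a derivation is completely determined by its values on these generators. I would first pin down $\Theta$ on the idempotents. Applying $\Theta$ to $e_i=e_i^2$ gives $\Theta(e_i)=\Theta(e_i)e_i+e_i\Theta(e_i)$; writing $\Theta(e_i)=\sum_{q\in\mathscr{P}}k_q q$ and using that left (respectively right) multiplication by $e_i$ picks out exactly the paths $q$ with $e(q)=i$ (respectively $s(q)=i$), one sees that a path $q$ can occur only if $s(q)=i$ or $e(q)=i$, while any $q$ with $s(q)=e(q)=i$ (an oriented cycle, or the trivial path itself) would satisfy $k_q=2k_q$ and hence vanish. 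This is precisely condition (1). The orthogonality relations $e_ie_j=0$ for $i\neq j$, fed through $\Theta(e_ie_j)=0$, then produce the first batch of the unstated ``certain conditions'' linking the coefficients of $\Theta(e_i)$ and $\Theta(e_j)$ on the paths running between $i$ and $j$.

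Next I would treat a single arrow $\alpha$. Writing $\alpha=e_{e(\alpha)}\,\alpha\,e_{s(\alpha)}$ and applying the Leibniz rule to this triple product, $\Theta(\alpha)$ splits as $\Theta(e_{e(\alpha)})\alpha\,e_{s(\alpha)}+e_{e(\alpha)}\Theta(\alpha)e_{s(\alpha)}+e_{e(\alpha)}\alpha\,\Theta(e_{s(\alpha)})$. Substituting the form of $\Theta$ on idempotents just obtained and retaining only the nonzero concatenations, the outer two pieces contribute terms of the shape $q\alpha$ and $\alpha q$ with $q$ a non-cycle attached at the matching endpoint, and the middle piece contributes paths parallel to $\alpha$ (same source and target). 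This is exactly the instance of formula (2) for $p=\alpha$.

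The general case I would handle by induction on the length of a nontrivial path, factoring $p=p'\alpha$ with $p'$ shorter and $\alpha$ an arrow, and using $\Theta(p'\alpha)=\Theta(p')\alpha+p'\Theta(\alpha)$. By the induction hypothesis and the arrow case, each summand already has the shape (2); the bookkeeping step is to observe that most cross-products collapse, namely a term of $\Theta(p')$ or $\Theta(\alpha)$ that would have to be concatenated on the side not matching an endpoint of $p$ either produces an oriented cycle or a forbidden product and therefore vanishes, so that only the prepend-, parallel-, and append-to-$p$ families survive and regroup into (2) for $p$, with the coefficients inherited (subject to the compatibility conditions) from those at shorter length. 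For the converse direction I would check that a linear map prescribed by (1) and (2) satisfies $\Theta(xy)=\Theta(x)y+x\Theta(y)$; by linearity it suffices to verify this on pairs of basis paths, where the product is a single concatenation or zero, and this is exactly the place where the relations among the coefficients $k_q^{e_i}$ and $k_q^{p}$ are required.

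I expect the main obstacle to be the combinatorial bookkeeping in the inductive step, namely correctly identifying which concatenations degenerate to zero (those creating cycles or endpoint mismatches) and confirming that the three surviving families reassemble with mutually consistent coefficients, together with making the ``certain conditions'' on the coefficients explicit enough that the converse verification goes through cleanly.
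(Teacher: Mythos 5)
The paper offers no proof of this lemma: it is quoted verbatim (as Lemma~\ref{xxsec4.6}) from Theorem~2.5 of \cite{GuoLi}, so there is no in-paper argument to compare yours against. Judged on its own terms, your outline is sound and is the natural way to prove such a statement. The idempotent step is correct: expanding $\Theta(e_i)=\Theta(e_i)e_i+e_i\Theta(e_i)$ over the path basis kills every $q$ with $s(q)=e(q)$ (via $k_q=2k_q$) and every $q$ not incident to $i$, which is exactly condition (1). The arrow step via $\alpha=e_{e(\alpha)}\alpha e_{s(\alpha)}$ and the induction $\Theta(p'\alpha)=\Theta(p')\alpha+p'\Theta(\alpha)$ also work: the cross terms you claim collapse really do vanish, precisely because the paths $q$ occurring in $\Theta(e_i)$ satisfy $s(q)\neq e(q)$, so the forbidden concatenations would force $q$ to be an oriented cycle. (One caution: the formula as printed in the lemma uses the opposite concatenation convention from the one this paper fixes in Section~2 --- under the paper's convention $qp\neq 0$ requires $e(p)=s(q)$, not $e(q)=s(p)$ --- and your verification implicitly reads the sums the consistent way; you should say so explicitly.) The only substantive incompleteness is the one you already flag: the ``certain conditions'' on the coefficients $k_q^{p}$ are left unstated both by you and by the lemma itself, yet they carry the entire burden of the converse direction, so a self-contained proof would have to extract them from the pairwise Leibniz identities $\Theta(xy)=\Theta(x)y+x\Theta(y)$ on basis paths rather than defer them.
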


\begin{lemma}\label{xxsec4.7}
Let $\Gamma$ be a connected quiver without oriented cycles and
${\mathcal K}\Gamma$ be the path algebra of $\Gamma$. If $\Theta$ is
a Lie derivation of ${\mathcal K}\Gamma$ with the standard
decomposition $\Theta=D+\Phi$, then  we have
$$
\Phi(e_i)=k_i
$$
for an arbitrary trivial path $e_i\in \Gamma$, where $k_i\in
{\mathcal K}$.
\end{lemma}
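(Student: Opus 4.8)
The plan is to reduce the statement to a computation of the center of $\mathcal{K}\Gamma$. By the very definition of the standard decomposition $(\spadesuit)$, the map $\Phi$ sends $\Lambda=\mathcal{K}\Gamma$ into $\mathcal{Z}(\Lambda)$; hence $\Phi(e_i)\in \mathcal{Z}(\mathcal{K}\Gamma)$ for every trivial path $e_i$. Thus it suffices to show that, for a connected quiver $\Gamma$ without oriented cycles, the center of $\mathcal{K}\Gamma$ consists precisely of the scalar multiples of the identity, that is, $\mathcal{Z}(\mathcal{K}\Gamma)=\mathcal{K}\cdot 1$. Granting this, $\Phi(e_i)=k_i\cdot 1$ for some $k_i\in\mathcal{K}$, which is exactly the asserted form once we identify $\mathcal{K}$ with $\mathcal{K}\cdot 1\subseteq \mathcal{K}\Gamma$.

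To compute the center, I would write an arbitrary $z\in\mathcal{Z}(\mathcal{K}\Gamma)$ in the path basis as $z=\sum_{p\in\mathscr{P}}c_p\overline p$ with $c_p\in\mathcal{K}$ (only finitely many nonzero). Since an element is central exactly when it commutes with all algebra generators, I first impose $e_jz=ze_j$ for every vertex $j$. Using the multiplication rule, $e_j\overline p=\overline p$ precisely when $e(p)=j$ and vanishes otherwise, while $\overline p\,e_j=\overline p$ precisely when $s(p)=j$. Comparing the coefficient of a fixed nontrivial path $q$ on both sides for the choice $j=e(q)$, the left side contributes $c_q$ whereas the right side contributes $0$, because $s(q)\neq e(q)$ for every nontrivial path in an acyclic quiver. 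This forces $c_q=0$ for all nontrivial $q$, so $z=\sum_{j\in\Gamma_0}c_{e_j}\overline{e_j}$ is a linear combination of trivial paths.

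Next I would impose commutation with the arrows. For an arrow $\alpha$ with $s(\alpha)=j$ and $e(\alpha)=i$, a short computation gives $z\overline\alpha=c_{e_i}\overline\alpha$ and $\overline\alpha z=c_{e_j}\overline\alpha$, so $z\overline\alpha=\overline\alpha z$ forces $c_{e_i}=c_{e_j}$ whenever $\Gamma$ has an arrow between $j$ and $i$. Because $\Gamma$ is connected, its underlying graph joins any two vertices by a chain of arrows, so all the coefficients $c_{e_j}$ coincide; writing $k$ for their common value yields $z=k\sum_{j}\overline{e_j}=k\cdot 1$. This establishes $\mathcal{Z}(\mathcal{K}\Gamma)=\mathcal{K}\cdot 1$ and completes the argument.

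The only mild obstacle I anticipate is bookkeeping with the multiplication convention $\overline x\,\overline y\neq 0\iff e(y)=s(x)$, which dictates on which side a trivial path acts as an identity; once this is tracked carefully, both the elimination of the nontrivial paths and the equalization of the scalar coefficients are routine. I would also remark that testing commutation only against the generators $e_j$ and the arrows is enough, since these generate $\mathcal{K}\Gamma$ as an algebra and the set of elements commuting with a fixed $z$ is a subalgebra.
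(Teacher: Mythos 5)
Your proof is correct and follows essentially the same route as the paper's: both use that $\Phi(e_i)\in\mathcal{Z}(\mathcal{K}\Gamma)$, expand in the path basis, multiply by trivial paths to kill the nontrivial-path coefficients (using acyclicity, so $s(q)\neq e(q)$), and then multiply by arrows and invoke connectedness to equalize the scalar coefficients. The only cosmetic difference is that you package the computation as the general statement $\mathcal{Z}(\mathcal{K}\Gamma)=\mathcal{K}\cdot 1$, whereas the paper runs the identical calculation directly on $\Phi(e_i)$.
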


\begin{proof}
Suppose that
$$
\Phi(e_i)=\sum_{j}k_je_j+\sum_{p\in \mathscr{P}, s(p)\neq e(p)}
k_pp.\eqno(4.1)
$$
Clearly, for every trivial path $e_t$,
$$
e_t\Phi(e_i)=k_te_t+\sum_{p\in \mathscr{P}, s(p)\neq e(p), e(p)=t
}k_pp.\eqno(4.2)
$$ On the
other hand,
$$
\Phi(e_i)e_t=k_te_t+\sum_{p\in \mathscr{P}, s(p)\neq e(p),
s(p)=t}k_pp.\eqno(4.3)
$$
Note that $\Phi(x)\in \mathcal {Z}(\Lambda)$ for all $x\in \Lambda$.
Combining (4.2) with (4.3) leads to
$$
\sum_{p\in\mathscr{P}, s(p)\neq e(p), e(p)=t}k_pp=\sum_{p\in
\mathscr{P}, s(p)\neq e(p), s(p)=t}k_pp.
$$
This implies
that for all nontrivial path $p$ with $s(p)=t$ or $e(p)=t$, the
coefficients $k_p=0$. Note that $e_{t}$ is arbitrary. Thus the
coefficients of all nontrivial paths is zero. So (4.1) becomes to
$$
\Phi(e_i)=\sum_{j}k_je_j.
$$

Now assume that $\alpha$ is an arrow in $\Gamma$ with $s(\alpha)=x$
and $e(\alpha)=y$. It is easy to check that
$\alpha\Phi(e_i)=k_x\alpha$ and $\Phi(e_i)\alpha=k_y\alpha$. It
follows from $\Phi(\Lambda)\in \mathcal {Z}(\Lambda)$ that
$k_x=k_y$. Note that $\Gamma$ is a connected quiver. Then the
coefficients of all trivial paths in $\Phi(e_i)$ are the same. That
is, $\Phi(e_i)=k_i\sum_je_j=k_i$.
\end{proof}

We can characterize Lie derivations of a finite dimensional path
algebra now.

\begin{theorem}
Let $\Gamma$ a quiver without oriented cycles and ${\mathcal
K}\Gamma$ be a finite dimensional path algebra of $\Gamma$. Then a
linear mapping $\Theta$ from ${\mathcal K}\Gamma$ into itself is a
Lie derivation if and only if $\Theta$ satisfies the following
conditions
\begin{enumerate}
\item[(1)] $\Theta(e_i)=k_i+\sum\limits_{q\in \mathscr{P}, s(q)\neq e(q)\atop s(q)=i\,\, or\,\, e(q)=i}k_q^{e_i}q$;
\item[(2)] For each nontrivial path $p$,
$$\Theta(p)=\sum_{q\in \mathscr{P}, s(q)\neq e(q)\atop e(q)=s(p)}k_q^{s(p)}qp+\sum_{q\in \mathscr{P}, s(q)=s(p)\atop e(q)=e(p)}k_q^pq
+\sum_{q\in \mathscr{P}, s(q)\neq e(q)\atop
s(q)=e(p)}k_q^{e(p)}pq.$$
\end{enumerate}
The coefficients $k_q^p$ here are subject to certain conditions the
same as in \cite{GuoLi} Theorem 2.5.
\end{theorem}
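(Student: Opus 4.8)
The plan is to read off both conditions from the standard decomposition supplied by Theorem \ref{xxsec4.4} together with the explicit description of ordinary derivations in Lemma \ref{xxsec4.6}. Since $\Lambda={\mathcal K}\Gamma$ is a finite dimensional path algebra of a quiver without oriented cycles, Theorem \ref{xxsec4.4} tells us that a linear map $\Theta$ is a Lie derivation if and only if it decomposes (uniquely) as $\Theta=D+\Phi$, where $D$ is an ordinary derivation of $\Lambda$ and $\Phi$ is a linear map from $\Lambda$ into ${\mathcal Z}(\Lambda)$ annihilating every commutator. The entire statement then amounts to transcribing this decomposition into the path basis: the derivation $D$ contributes exactly the path-sums prescribed by Lemma \ref{xxsec4.6}, while $\Phi$ contributes only the scalar terms $k_i$ attached to the trivial paths.

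For the forward direction I would argue as follows. Assume $\Theta$ is a Lie derivation and write $\Theta=D+\Phi$ as above. Applying Lemma \ref{xxsec4.6} to the derivation $D$ gives $D(e_i)=\sum_{q}k_q^{e_i}q$ (sum over nontrivial $q$ with $s(q)=i$ or $e(q)=i$) and the stated three-term expression for $D(p)$ on every nontrivial path $p$, with coefficients obeying the conditions of \cite{GuoLi}. For the central part, Lemma \ref{xxsec4.7} (applied on each connected component of $\Gamma$) yields $\Phi(e_i)=k_i$ with $k_i\in{\mathcal K}$, and the argument of Corollary \ref{xxsec4.5} -- namely $\overline p=[\overline p,s(p)]$ together with $\Phi([x,y])=0$ -- forces $\Phi(\overline p)=0$ for every nontrivial path $p$. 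Adding the two contributions gives $\Theta(e_i)=k_i+\sum_q k_q^{e_i}q$ and $\Theta(p)=D(p)$, which are precisely conditions (1) and (2).

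For the converse I would build the decomposition by hand. Given a linear map $\Theta$ of the stated form, define $D$ on the basis by $D(e_i)=\sum_q k_q^{e_i}q$ and $D(p)=\Theta(p)$ for nontrivial $p$, and define $\Phi$ by $\Phi(e_i)=k_i$ and $\Phi(\overline p)=0$. Since the coefficients are assumed to satisfy the conditions of \cite{GuoLi} Theorem 2.5, Lemma \ref{xxsec4.6} shows that $D$ is a derivation, hence a Lie derivation. It then remains to verify that $\Phi$ maps into ${\mathcal Z}(\Lambda)$ and kills all commutators, for then $\Theta([x,y])=D([x,y])=[D(x),y]+[x,D(y)]=[\Theta(x),y]+[x,\Theta(y)]$, the central values of $\Phi$ dropping out of the brackets. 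Centrality is immediate because each $\Phi(e_i)=k_i$ is a scalar multiple of a central idempotent. The one point demanding care is $\Phi([x,y])=0$, which I would reduce to basis elements: the bracket of two trivial paths vanishes, while any bracket involving a nontrivial path is a ${\mathcal K}$-combination of nontrivial paths (products of paths are again paths or zero), all of which $\Phi$ annihilates. This basis-level bookkeeping, together with the need to run Lemma \ref{xxsec4.7} component by component when $\Gamma$ is disconnected, is the only genuinely delicate step; the remainder is a direct transcription of the standard decomposition.
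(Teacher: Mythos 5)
Your proposal is correct and follows exactly the route the paper takes: the paper's entire proof is the one-line remark that the theorem ``follows from Theorem \ref{xxsec4.4}, Lemma \ref{xxsec4.6} and Lemma \ref{xxsec4.7} easily,'' and your argument is precisely the detailed unwinding of that citation (standard decomposition $\Theta=D+\Phi$, Guo--Li's description of $D$, $\Phi(e_i)=k_i$ from Lemma \ref{xxsec4.7}, and $\Phi(\overline p)=[\overline p,e_{s(p)}]\mapsto 0$ for nontrivial $p$). Your extra care about checking $\Phi([x,y])=0$ on basis elements in the converse direction, and about handling disconnected $\Gamma$ component by component, supplies details the paper leaves implicit but does not change the approach.
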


\begin{proof}
It follows from Theorem \ref{xxsec4.4}, Lemma \ref{xxsec4.6} and
Lemma \ref{xxsec4.7} easily.
\end{proof}

\begin{example}
Let $\Gamma$ be a quiver as follows.
$$\xymatrix@C=13mm{
  \bullet
  \ar@<0pt>[r]_(0){1}^{\alpha}  &
  \bullet &
  \bullet
  \ar@<0pt>@[r][l]_(0.5){\beta}^(0){3}^(1){2} }$$

Let $\Theta$ be a linear mapping from ${\mathcal K}\Gamma$ into
itself defined by $\Theta(e_1)=k_1-\alpha$,
$\Theta(e_2)=k_2+\alpha+\beta$, $\Theta(e_3)=k_3-\beta$ and
$\Theta(\alpha)=\Theta(\beta)=0$, where $k_i\in {\mathcal K}$ for
$i=1, 2, 3$. If there exists some $i\in \{1, 2, 3\}$ such that
$k_i\neq 0$, then $\Theta$ is a Lie derivation of ${\mathcal
K}\Gamma$ but not a derivation.
\end{example}

\bigskip

%\noindent{\bf Acknowledgements} The authors heartily thank the
%referee for many relevant comments which significantly improved the
%final presentation of this article. In particular, he/she found a
%substantive gap in the original proof of the main result, Theorem
%3.3, which the authors have now repaired.

\vspace{1mm}

\end{document}